\title{\textbf{Internal Bousfield Localizations}}
\author{Renaud Gauthier \footnote{rg.mathematics@gmail.com} \\ \\}
\theoremstyle{definition}
\newtheorem{iRhfC}{Definition}[subsubsection]
\newtheorem{iLhfC}[iRhfC]{Definition}
\newtheorem{equ}{Proposition}[subsection]
\newtheorem{17.7.7}[equ]{Theorem[17.7.7 mod]}
\newtheorem{Yoneda}{Theorem}[subsection]
\newtheorem{Fibrant}{Proposition}[subsection]
\newtheorem{higheradj}[Fibrant]{Theorem}
\newtheorem{Fibrant2}[Fibrant]{Proposition}
\newtheorem{HomAXsimpres}{Proposition}[subsection]
\newtheorem{17.2.5}{Definition}[subsection]
\newtheorem{17.2.7}[17.2.5]{Definition}
\newtheorem{17.2.11}[17.2.5]{Theorem}
\newtheorem{17.2.10}[17.2.5]{Proposition}
\newtheorem{3.1.5}{Proposition[3.1.5 mod]}[subsection]
\newtheorem{4.2.3}[3.1.5]{Proposition[4.2.3 mod]}
\newtheorem{4.2.5}[3.1.5]{Proposition[4.2.5 mod]}
\newtheorem{12.5.2}{Proposition[12.5.2 mod]}[subsection]
\newtheorem{3.2.3}{Proposition[3.2.3 mod]}[section]
\newtheorem{gloc implies loc}[3.2.3]{Lemma}
\newtheorem{lequ implies glequ}[3.2.3]{Lemma}
\newtheorem{ideal}[3.2.3]{Definition}
\newtheorem{iGammalocequideal}[3.2.3]{Lemma}
\newtheorem{Gammalocequideal}[3.2.3]{Lemma}
\newtheorem{ideal2}[3.2.3]{Definition}
\newtheorem{iKcolobjideal}[3.2.3]{Lemma}
\newtheorem{diagHom}[3.2.3]{Lemma}
\newtheorem{4.5.6}[3.2.3]{Proposition[4.5.6 mod]}
\newtheorem{4.5.2}[3.2.3]{Lemma[4.5.2 mod]}
\newtheorem{4.5.1}[3.2.3]{Proposition[4.5.1 mod]}
\newtheorem{Glocal}{Definition}[section]
\newtheorem{Glocalequ}[Glocal]{Definition}
\newtheorem{4.1.1}[Glocal]{Theorem}
\newtheorem{3.2.4}{Proposition[3.2.4 mod]}[section]
\newtheorem{equ implies K-colequ}[3.2.4]{Proposition}
\newtheorem{5.3.2}[3.2.4]{Lemma [5.3.2 mod]}
\newtheorem{gK-col implies K-col}[3.2.4]{Lemma}
\newtheorem{K-colequ implies gK-colequ}[3.2.4]{Proposition}
\newtheorem{colocal}{Definition}[section]
\newtheorem{colequ}[colocal]{Definition}
\newtheorem{Kcolequ}[colocal]{Definition}
\newtheorem{RBous}[colocal]{Theorem[5.1.1 mod]}
\DeclareMathOperator*{\adj}{\rlarr}
\newcommand{\beq}{\begin{equation}}
\newcommand{\eeq}{\end{equation}}
\newcommand{\rarr}{\rightarrow}
\newcommand{\Lrarr}{\longrightarrow}
\newcommand{\rarrequ}{\stackrel{\simeq}{\Lrarr}}
\newcommand{\rlarr}{\rightleftarrows}
\newcommand{\cC}{\mathcal{C}}
\newcommand{\cCop}{\cC^{\op}}
\newcommand{\cD}{\mathcal{D}}
\newcommand{\cE}{\mathcal{E}}
\newcommand{\cK}{\mathcal{K}}
\newcommand{\cM}{\mathcal{M}}
\newcommand{\cN}{\mathcal{N}}
\newcommand{\cV}{\mathcal{V}}
\newcommand{\bL}{\mathbb{L}}
\newcommand{\bR}{\mathbb{R}}
\newcommand{\bU}{\mathbb{U}}
\newcommand{\diag}{\text{diag}}
\newcommand{\Hom}{\text{Hom}}
\newcommand{\map}{\text{map}}
\newcommand{\op}{\text{op}}
\newcommand{\oT}{\otimes}
\newcommand{\Set}{\text{Set}}
\newcommand{\SetD}{\Set_{\Delta}}
\newcommand{\uHom}{\underline{\Hom}}
\newcommand{\cMDop}{(\cM)^{\Delta^{\op}}}
\newcommand{\DCop}{\cD_0^{\cCop}}
\newcommand{\DiCop}{\cD_i^{\cCop}}
\newcommand{\DipCop}{\cD_{i+1}^{\cCop}}
\newcommand{\DCopw}{\cD_0^{\cCop \; \wedge}}
\newcommand{\DiCopw}{\cD_{i}^{\cCop \; \wedge}}
\newcommand{\DCopDop}{(\DCop)^{\Delta^{\op}}}
\newcommand{\DCopwDop}{(\DCopw)^{\Delta^{\op}}}
\newcommand{\DCopwD}{(\DCopw)^{\Delta}}
\newcommand{\hiCop}{h_i^{\cCop}}
\newcommand{\kipCop}{k_{i+1}^{\cCop}}
\newcommand{\sPr}{\text{sPr}}
\newcommand{\ssPr}{\text{ssPr}}
\newcommand{\uHomMI}{\uHom_{\cM^I}}
\newcommand{\HomM}{\Hom_{\cM}}
\newcommand{\uHomM}{\uHom_{\cM}}
\newcommand{\HomD}{\Hom_{\cD_0}}
\newcommand{\Icof}{I\text{-Cof}}
\newcommand{\Iinj}{I\text{-inj}}
\newcommand{\uHomD}{\uHom_{\cD_0}}
\newcommand{\HomDCop}{\Hom_{\DCop}}
\newcommand{\uHomDCop}{\uHom_{\DCop}}
\newcommand{\HomDCopw}{\Hom_{\DCopw}}
\newcommand{\uHomDCopw}{\uHom_{\DCopw}}
\newcommand{\LGDCop}{L_{\Gamma}\DCop}
\newcommand{\RKLGDCop}{R_{\cK_0}\LGDCop}
\newcommand{\RKM}{R_{\cK}\cM}
\newcommand{\wLG}{\widetilde{\Lambda \Gamma}}
\begin{document}
\maketitle
\begin{abstract}
We develop the notion of left and right Bousfield localizations in proper, cellular symmetric monoidal model categories with cofibrant unit, using homotopy function complexes defined by internal Hom objects instead of Hom sets.
\end{abstract}

\newpage
\section{Introduction}
From \cite{TV}, recall that one can define a stack in $\sPr(T)$, the model category of simplicial presheaves over a simplicial model category $T$, as an object $F \in \sPr(T)$ satisfying hyperdescent, meaning being local with respect to a certain class of hypercovers, or being local with respect to local equivalences, both concepts necessitating that we define a topology on $T$. That this is really one localization is due to the fact that local equivalences (or $\pi_*$-equivalences are they are sometimes called, see \cite{TV}), maps $F \rarr G$ in $\sPr(T)$ such that for all $n > 0$, we have an isomorphism of sheaves $\pi_n(F) \rarr \pi_n(G)$, among other things, are also hypercovers $F \rarr G$ in $\ssPr(T)$ (viewed as constant simplicial objects), maps such that for all $n \geq 0$:
\beq
F^{\mathbb{R} \Delta^n} \rarr F^{\mathbb{R}\partial \Delta^n} \times^h_{G^{\mathbb{R}\partial \Delta^n}}G^{\mathbb{R} \Delta^n} \nonumber
\eeq
is a $\tau$-covering, $\tau$ being a topology on $T$. Such a map $F \rarr G$ would be a $\mathbb{S}$-colocal object in the language of \cite{Hi}. Suppose we consider objects $\tilde{k_0}$ other than the sphere spectrum $\mathbb{S}$ in this definition of a local equivalence, cosimplicial resolutions of some $k_0$, object of some class $K_0$. The idea is that we want to consider reference objects that are very general, in keeping with the philosophy of relative Algebraic Geometry. Suppose further we consider functors not valued in $\SetD$, the category of simplicial sets, but in some category $\cD$, which for the moment we can suppose is an internal, proper, cellular model category. It would be natural then for the sake of localization to use internal Hom objects for the definition of homotopy function complexes, as opposed to using Hom sets. As a matter of fact, we will closely follow Hirschhorn's work (\cite{Hi}) regarding Bousfield localizations and make the appropriate changes.\\

We start with a $\bU$-small pseudo-model category $(\cC,W)$ (\cite{TV}), $\cD_0$ a proper, cellular, symmetric monoidal model category, and we consider the functor category $\DCop$. Prestacks in this setting are functors $F: \cCop \rarr \cD_0$ mapping equivalences to equivalences; if $x \rarr y$ is in $W$, then $Fx \rarr Fy$ is an equivalence in $\cD_0$. If $\cD_0 = \SetD$, one can use the classical Yoneda lemma: $Fx \simeq \Hom(h_x, F)$, $h_x = \Hom_{\cC}(-,x)$, $\cC$ a simplicial category, to see equivalence preserving functors as local objects. For $\cD_0$-enriched functors, we must consider enriched Yoneda: $Fx \simeq \int_{y \in \cCop} \uHom(h_x(y), Fy)$. Then $Fx \rarrequ Fy$ will follow from defining a notion of local object using internal Hom objects. This leads us to defining the internal left Bousfield localization $\LGDCop = \DCopw$ of $\DCop$ with respect to $\Gamma = \{h_y \rarr h_x \; | \; x \rarr y \in W^{\op} \}$. However, as will become clear later, for the sake of generalizing classical results from \cite{Hi} to the setting where Hom sets are replaced by internal Hom objects, we denote by $\Gamma^0$ the collection we just defined, and we will take a localization of $\DCop$ with respect to:
\beq
\Gamma := \Gamma^0 \oT \DCop = \{ h_y \oT F \rarr h_x \oT F \, | \, x \rarr y \in W^{\op},\, F \in \DCop \} \nonumber  
\eeq

Our philosophy at this point departs markedly from the standard philosophy of Homotopical Algebraic Geometry (\cite{TV}) in that we have no topology on $\cC$, and we limit ourselves to considering only one condition defining $\pi_*$-local equivalences, namely $\pi_n(F) \rarrequ \pi_n(G)$ for all $n>0$, which we internalize, using not spheres, but arbitrary objects $k_0$ of some class of objects $K_0$ of $\DCop$:
\beq
\uHom(\tilde{k_0}, \hat{F}) \rarrequ \uHom(\tilde{k_0}, \hat{G}) \label{first}
\eeq
where using the sphere spectrum $\mathbb{S}$ for $\pi_*$-local equivalences is replaced by using a cosimplicial resolution $\tilde{k_0}$ of a single object $k_0$. Here $\hat{F}$ is a fibrant approximation to $F$. \eqref{first} would define what it means for $F \rarr G$ to be an internal $K_0$-colocal equivalence. From there we are naturally led to considering an internal right Bousfield localization $\RKLGDCop$ of $\DCopw$ with respect to $\cK_0$, the class of internal $K_0$-colocal equivalences.\\

In Section 2 we present the main construction, giving a category of prestacks $\DCopw = \LGDCop$ from localizing $\DCop$ along a subset $\Gamma$, followed by further taking a right Bousfield localization along a class of colocal equivalences, all concepts being redefined using internal Homs. In Section 3 we present foundational results, such as the enriched Yoneda lemma, and localization using internal Homs for homotopy function complexes. In Section 4 we discuss cardinality arguments needed in the proof of our main result, the existence of a left Bousfield localization using internal Homs instead of Hom sets. In Section 5 we present technical results needed to prove the existence of such a localization, which is itself given in Section 6. In Section 7 we give those results needed to prove the existence of a right Bousfield localization using internal Homs, which is stated and proved in Section 8.\\

\newpage

Relation to past work: it was pointed out to the author by J. Gutierrez and D. White, that the present work is quite close to past work on the subject. In particular they both referenced two papers the author was wholly unaware of, namely \cite{B} and \cite{GR}. As a matter of fact, Barwick's work is so close to the present one, the original thought of using the Hom from a Quillen adjunction of two variables to define Bousfield localizations must be credited to him. Presently we discuss localizations of symmetric monoidal model categories, and some work has been done on the subject, albeit in a classical sense, not using internal Homs. This work can be found in \cite{W} and \cite{WY} where such localizations are referred to as monoidal Bousfield localizations. \\

To come back to \cite{W} and \cite{GR}, there are slight differences to be noted. Barwick works with tractable categories, which are combinatorial, hence cofibrantly generated. We work with cellular model categories, which are also cofibrantly generated. Barwick observes that he does not know of any left proper combinatorial model category that is not tractable. On our part, we do not see how to relate cellular model categories with tractable model categories; our objects of study seem different. Additionally, Barwick works with $\cV$-enriched categories $\cC$, for $\cV$ a symmetric monoidal model category, and the ``internal" Homs he uses, derived from a Quillen adjunction of two variables due to this enrichment, are objects in $\cV$. In particular he shows that for a small site $\cC$, for $\cV$ a tractable symmetric monoidal model category with cofibrant unit, $\cV^{\cC}$ is a $\cV$-model category, so is enriched in $\cV$, on which we define an injective local model structure as an enriched left Bousfield localization of $\cV^{\cC}$ with its injective model structure. What we do instead is show that $\cV^{\cC}$ is a symmetric monoidal model category with an internal Hom, and it is those internal Homs we use in the definition of our version of Bousfield localizations. For this reason we call them internal Bousfield localizations, as opposed to enriched Bousfield localizations, which use Hom objects valued in $\cV$.\\

Regarding the Bousfield localization itself, Barwick's construction is an astute one. He does not define a new Bousfield localization. He uses what we refer to as the classical Bousfield localization, that of Hirschhorn in \cite{Hi}. If $H$ is a set of maps we are localizing with respect to, $S$ is a set representing homotopy classes of $H$, $I$ a generating set of cofibrations with cofibrant domains, he shows his enriched Bousfield localization of a tractable, left proper model category $\cC$ is none other than $L_{I \Box S} \cC$ in the classical sense, and then uses Thm 17.4.16 of \cite{Hi} to have his enriched Homs appear. His proof is short, precisely because of his ingenious use of that result, as opposed to ours, which is a tedious reworking of Hirschhorn's work in \cite{Hi} about left Bousfield localization, in the event that Hom sets are replaced by internal Hom objects.\\

Barwick's work is elegantly generalized in \cite{GR}, in the larger setting of combinatorial model categories. Gutierrez and Roitzhem consider a Quillen adjunction of two variables between combinatorial model categories, $\cC \times \cD \rarr \cE$, and they define the left Bousfield localization $L_S \cE$ of $\cE$ with respect to a set $S$ of morphisms in $\cC$, which they call the $S$-local model structure on $\cE$. In the notations of \cite{B} as used above, if we consider a Quillen adjunction of two variables $\cC \otimes \cV \rarr \cC$ associated with a $\cV$-enrichment, then $L_S \cC = L_{I \Box S} \cC$, which corresponds to a $\cV$-enriched left Bousfield localization of $\cC$ with respect to $S$ as defined by Barwick.\\

\section{Construction}
We fix a universe $\bU$. Let $(\cC,W)$ be a $\bU$-small pseudo-model category, $\cD_0$ a proper, cellular model category, $\DCop$ the model category of functors from $\cCop$ to $\cD_0$. It is also a proper, cellular model category (Thm 13.1.14 and Prop. 12.1.5 of \cite{Hi}). We aim to take left and right Bousfield localizations of $\DCop$ with respect to certain classes of maps, in a generalized sense. By this we mean we will use internal Hom objects in the definition of homotopy function complexes instead of Hom sets. We will prepare the ground for operating such localizations. This is what ``construction" is in reference to. We will first introduce $\Gamma^0 = \{h_y \rarr h_x \;|\; x \rarr y \in W^{\op} \}$, where $h_a = \Hom_{\cC}(-,a)$ and $W$ denotes the class of weak equivalences in $\cC$. This collection is enlarged to $\Gamma = \Gamma^0 \oT \DCop$ and we construct $\LGDCop = \DCopw$, the internal left Bousfield localization of $\DCop$ with respect to $\Gamma$. This should provide a notion of category of pre-stacks, since such a localization in particular provides a localization with respect to $\Gamma^0$. \\

\newpage
\noindent

From that point forward, we will introduce a class $K_0$ of objects in $\DCopw$ and consider the class $\cK_0$ of internal $K_0$-colocal equivalences. We will then construct $R_{\cK_0} \DCopw$ the internal right Bousfield localization of $\DCopw$ with respect to $\cK_0$, and this will constitute a first step towards defining a generalized notion of stacks on $\cC$.\\

Many of the results we will discuss in this work are stated and proved in the classical sense in \cite{Hi}. To make comparison with those original statements easier, next to each claim we will put in bracket the original indexation in \cite{Hi} along with ``mod", indicating that we are stating a modified version thereof.\\

\subsection{Preserving equivalences}
In a first time, we would like functors $F: \cCop \rarr \cD_0$ to map equivalences $u: x \rarr y$ in $\cCop$ to equivalences $Fx \rarr Fy$ in $\cD_0$. For this purpose we introduce the following class:
\beq
\Gamma^0 = \{h_y \rarr h_x \;|\; x \rarr y \in W^{\op} \} \nonumber
\eeq
If we were to use the classical Yoneda lemma, to obtain the desired equivalences, it would suffice to show we have:
\beq
\Hom(h_x, F) \rarrequ \Hom(h_y,F) \nonumber
\eeq
which would make $F$ $\Gamma^0$-local in the terminology of \cite{Hi}. However, $F$ is $\cD_0$-valued, not $\Set$-valued, so we use the enriched Yoneda lemma (\cite{K}). In order to do so we start to use internal Homs (whose existence will be warranted below), hence we would want:
\beq
\uHomDCop(h_x, F) \rarrequ \uHomDCop(h_y,F) \nonumber
\eeq
We will actually show:
\beq
\uHomDCop(\tilde{h_x}, \hat{F}) \rarrequ \uHomDCop(\tilde{h_y},\hat{F}) \nonumber
\eeq
in the Reedy structure of $\DCopDop$ ($\tilde{h}$ cofibrant approximation to $h$, $\hat{F}$ simplicial resolution of $F$) which will imply $F$ maps equivalences to equivalences. \\

\newpage

As a matter of fact, we will show the more general result:
\beq
\uHomDCop(\tilde{h_x} \oT \tilde{G}, \hat{F}) \rarrequ \uHomDCop(\tilde{h_y} \oT \tilde{G},\hat{F}) \nonumber
\eeq
for all $G \in \DCop$, which will imply the equivalence prior.\\

\subsection{Monoidal structure}
\subsubsection{Definitions}
In order to use internal Homs, we first ask that $\cD_0$ be a monoidal model category (\cite{Ho}), with internal Hom $\uHomD$. For simplicity, every notion of \cite{Hi} using Hom sets that is generalized using internal Homs will be referred to as a \textbf{generalized} \index{generalized concept} or as an \textbf{internal concept}. \index{internal concept} For instance, we would define:
\beq
\map(k_0,F) = \uHomDCopw(\tilde{k_0}, \hat{F}) \nonumber
\eeq
as an internal right homotopy function complex (abbr. iRhfC), where $\tilde{k_0}$ is a cofibrant approximation to $k_0$, and $\hat{F}$ is a simplicial resolution of $F$. The existence of an internal Hom in $\DCop$ will be discussed below.\\

For later purposes, we make the following definitions:
\begin{iRhfC}
An \textbf{internal right homotopy function complex} \index{function complex! internal right homotopy} (abbr. iRhfC) is an object of $\DCopDop$ of the form $\uHomDCop(\tilde{X}, \hat{Y})$, where $\tilde{X}$ is a cofibrant approximation to $X$ and $\hat{Y}$ is a simplicial resolution of $Y$.
\end{iRhfC}
\begin{iLhfC}
An \textbf{internal left homotopy function complex} \index{function complex! internal left homotopy} (abbr. iLhfC) is an object of $\DCopDop$ of the form $\uHomDCop(\tilde{X}, \hat{Y})$, where $\tilde{X}$ is a cosimplicial resolution of $X$ and $\hat{Y}$ is a fibrant approximation to $Y$.
\end{iLhfC}
Note that $h_x = \Hom_{\cC}(-,x)$ is a set-valued functor, not an object of $\DCop$. Hence we ask that $\cC$ be a $\cD_0$-enriched category(\cite{K}). This we can do since $\cD_0$ is a monoidal category.\\

\subsubsection{$\DCop$ is a monoidal model category}
We will also need that $\DCop$ be a monoidal model category. We define the monoidal structure on $\DCop$ point-wise: if $F,G \in \DCop$, then for any $x \in \cC$, $(F \otimes G)(x) = Fx \otimes Gx$. Endow $\DCop$ with the injective model structure; equivalences and cofibrations are defined point-wise. This makes the tensor product on $\DCop$ a Quillen bifunctor. Indeed, let $\alpha: U \rarr V$ be a cofibration in $\DCop$, $\beta: W \rarr X$ a cofibration in $\DCop$ as well, we need:
\beq
\alpha \Box \beta: (V \otimes W) \coprod_{U\otimes W} (U \otimes X) \rarr V \otimes X \nonumber
\eeq
to be a cofibration in $\DCop$, trivial if either of $\alpha$ or $\beta$ is (\cite{Ho}). Since we take the injective model structure on $\DCop$, we have to check that pointwise: let $x \in \cCop$. We are looking at:
\beq
\alpha \Box \beta(x): (Vx \otimes Wx) \coprod_{Ux \otimes Wx} (Ux \otimes Xx) \rarr Vx \otimes Xx \nonumber
\eeq
Now $U \rarr V$ cofibration in $\DCop$ with the injective model structure means $Ux \rarr Vx$ cofibration in $\cD_0$, and $W \rarr X$ cofibration means $Wx \rarr Xx$ cofibration in $\cD_0$, and it follows that the above map is a cofibration in $\cD_0$ (since it is a monoidal model category), and this for all $x \in \cCop$, hence a cofibration in $\DCop$. Since equivalences are defined pointwise, we also have that $\alpha \Box \beta$ is trivial if either of $\alpha $ or $\beta$ is. The second condition for being a monoidal model category (\cite{Ho}) is that if $Q1 \rarr 1$ is a cofibrant approximation to the unit 1, then for all $X$ cofibrant, $Q1 \otimes X \rarr 1 \otimes X$ is a weak equivalence. Here we assume the unit is cofibrant, a condition we will use later. If that is the case, this condition is automatically satisfied. This makes $\DCop$ into a monoidal model category. Observe that having $\otimes$ a Quillen bifunctor, if $C$ is cofibrant in $\DCop$, then $C \otimes -: \DCop \rarr \DCop$ is left Quillen, so preserves cofibrations and trivial cofibrations, a fact that will be very important in the work that follows.\\

\newpage

\subsubsection{Construction of $\uHomDCop$}
The internal Hom of $\DCop$ is formally defined as follows: an element $\alpha \in \HomDCop(F \otimes G, H)$ is defined pointwise: for $x \in \cC$, $\alpha(x): Fx \otimes Gx \rarr Hx$ in $\cD_0$. Now:
\beq
\HomD(Fx \otimes Gx, Hx) \cong \HomD(Fx, \uHomD(Gx, Hx)) \nonumber
\eeq
so $\alpha(x)$ would correspond to some $\beta(x) \in \HomD(Fx, \uHomD(Gx, Hx))$. Letting
\beq
\uHomD(Gx,Hx) = \uHomDCop(G,H)(x) \label{daguer}
\eeq
we have $\uHomDCop(G,H) \in \DCop$, and with this notation:
\beq
\HomDCop(F \otimes G, H) \cong \HomDCop(F, \uHomDCop(G, H)) \nonumber
\eeq
This can be formalized using the language of ends (\cite{McL}). It suffices to write, still using the identification \eqref{daguer}:
\begin{align}
\HomDCop(F \otimes G, H)&\cong \int_{x \in \cCop} \HomD(Fx \otimes Gx, Hx) \nonumber \\
&\cong \int_{x \in \cCop} \HomD(Fx, \uHomD(Gx,Hx)) \nonumber \\
&=\int_{x \in \cCop} \HomD(Fx, \uHomDCop(G,H)(x)) \nonumber \\
&\cong \HomDCop(F, \uHomDCop(G,H)) \nonumber
\end{align}

We can make this more precise. From now on, we will assume that $(\cD_0, \otimes)$ is also symmetric. Following \cite{GK}, and working in full generality for later purposes, consider $\cM$ a closed symmetric monoidal category (i.e. it has an internal Hom), which presently is $\cD_0$ for us. An $\cM$-module structure on a category $\cC$ is given by an action:
\begin{align}
\otimes: \cC \times \cM & \rarr \cC \nonumber \\
(X,M) & \mapsto X \otimes M \nonumber
\end{align}
This action is closed if we have two functors:
\begin{align}
\map_{\cC}: \cCop \times \cC &\rarr \cM \nonumber \\
(X,Y) &\mapsto \map_{\cC}(X,Y) \nonumber
\end{align}
and:
\begin{align}
\text{exp}: \cM^{\op} \times \cC &\rarr \cC \nonumber \\
(K,Y) &\mapsto Y^K \nonumber
\end{align}
in such a manner that we have, for all $K \in \cM$, $X,Y \in \cC$, the following natural isomorphisms:
\beq
\Hom_{\cC}(X \otimes K, Y) \cong \HomM(K, \map_{\cC}(X,Y)) \cong \Hom_{\cC}(X, Y^K) \nonumber
\eeq
Now consider the following functor, where $I$ is an indexing set:
\begin{align}
\cM^I \times \cM \rarr &\cM^I \nonumber \\
(X,K) \mapsto &X \otimes K \nonumber
\end{align}
defined by $(X \otimes K)_i = X_i \otimes K $. This defines an $\cM$-module structure on $\cM^I$. It is closed if we use the following definitions: for $X,Y \in \cM^I$, $K \in \cM$, define $Y^K=\uHomM(K,Y)$ by $\uHomM(K,Y)_i = \uHomM(K,Y_i)$ and:
\beq
\map_{\cM^I}(X,Y) = \int_i \uHomM(X_i,Y_i) \nonumber
\eeq
In particular for $\cM = \cD_0$ and $I = \cCop$, this gives us, for $X,Y \in \DCop$, $K \in \cD_0$:
\beq
\HomDCop(X \otimes K, Y) \cong \HomDCop(X, \uHomD(K,Y)) \cong \HomD(K, \map_{\DCop}(X,Y)) \nonumber
\eeq
Coming back to the general case, consider:
\begin{align}
h_i:I &\rarr \cM \nonumber \\
j &\mapsto h_i(j) = \coprod_{I(i,j)} 1 \nonumber
\end{align}
where 1 is the unit of $\cM$. Define a monoidal structure on $\cM^I$ as follows: for $X,Y \in \cM^I$, let $(X \otimes Y)_i = X_i \otimes Y_i$, making $(\cM^I, \otimes)$ into a symmetric monoidal category, which is furthermore closed, with internal Hom given by:
\begin{align}
\uHomMI(X,Y)_i &= \map_{\cM^I}(h_i \otimes X,Y) \nonumber \\
&=\int_{j \in I} \uHomM(h_i(j) \otimes X_j, Y_j) \nonumber
\end{align}
For $\cM = \cD_0$ and $I = \cCop$, this gives us the internal Hom in $\DCop$:
\begin{align}
\uHomDCop(F,G)(x) &= \map_{\DCop}(h_x \otimes F,G) \label{intHomDCop} \\
&=\int_{y \in \cCop}\uHomD(h_x(y) \otimes Fy , Gy) \nonumber \\
&=\int_{y \in \cCop} \uHomD(h_x(y) , \uHomD(Fy, Gy)) \label{star1} \\
	&= \map_{\DCop}(h_x, \uHomD(F-,G-)) \nonumber
\end{align}
We proceed from \eqref{star1} to get \eqref{daguer} as follows. For any $W$:
\begin{align}
	\HomD(W,\, &\uHomDCop(F,G)(x)) \nonumber \\
	&= \HomD(W, \int_{y \in \cCop} \uHomD(h_x(y) , \uHomD(Fy, Gy))) \nonumber \\
&= \int_{y \in \cCop}\HomD(W, \uHomD(h_x(y) , \uHomD(Fy, Gy))) \nonumber \\
&= \int \HomD(W \otimes h_x(y) , \uHomD(Fy,Gy)) \nonumber \\
&= \HomDCop(W \otimes h_x, \uHomD(F-,G-)) \nonumber
\end{align}
Now we use the fact that if we define the evaluation functor $Ev_i: \cM^I \rarr \cM$ by $Ev_i(X) = X_i$ and $F : \cM \rarr \cM^I $ by $F_i(M) = h_i \otimes M$, then we have $F_i  \dashv  Ev_i$, which reads, for $\cM = \cD_0$ and $I = \cCop$:
\beq
\HomDCop(h_x \otimes M, G) \cong \HomD(M, G(x)) \nonumber
\eeq
using this in the previous computation gives us:
\beq
\HomD(W, \uHomDCop(F,G)(x)) = \HomD(W, \uHomD(Fx,Gx)) \nonumber
\eeq
and this being true for all $W$, we have:
\beq
\uHomDCop(F,G)(x) \cong \uHomD(Fx,Gx) \nonumber
\eeq

\newpage

\subsubsection{More on tensor products}
As part of the construction of $\uHomDCop$, and for later purposes, we use the following result of \cite{GK}: if $I$ is a Reedy category, $\cM$ is a cofibrantly generated symmetric monoidal model category with cofibrant unit, then $\cM^I$ is a closed symmetric monoidal model category. We use this for $\cM = \DCopw$, cellular, in particular cofibrantly generated, with a symmetric monoidal structure and a cofibrant unit. It follows that if we take $I = \Delta^{\op}$, then $\DCopwDop$ is a symmetric monoidal model category. In particular $\otimes$ is a Quillen bifunctor.\\

Note that one has to keep track of what tensor products are in use, whether they are part of a monoidal structure, or a generalization thereof, for example when we have model structures. For instance:
\begin{align}
\otimes : \DCopwD \times \DCopw & \rarr \DCopwD \nonumber \\
(\tilde{k_0},\tilde{D}) & \mapsto \tilde{k_0} \otimes \tilde{D} \nonumber
\end{align}
defines a $\DCopw$-module structure on $\DCopwD$. If $\tilde{D}$ is a cofibrant approximation to $D$, $\tilde{k_0}$ a cosimplicial resolution of $k_0$, in writing:
\beq
\HomDCopw(\tilde{D}, \uHomDCopw(\tilde{k_0}, \hat{X})) \cong \HomDCopw(\tilde{D} \otimes \tilde{k_0}, \hat{X}) \nonumber
\eeq
as provided by Thm \ref{3.4.2}, the tensor product $\tilde{D} \otimes \tilde{k_0}$ makes sense. Here we have used $\uHomDCopw = \uHomDCop$, for the simple reason that the definition of an internal Hom is peculiar to the monoidal structure, not the model structure, so is preserved after localization. This observation, which we will make more precise, presupposes that $\DCopw$ is a monoidal model category, which we now show.

\subsubsection{$\DCopw$ is a monoidal model category}
We show $\DCopw$ is a monoidal model category. We use the same tensor product as for $\DCop$, and we need that it be a Quillen bifunctor, that is it satisfies the pushout-product axiom. First cofibrations in $\LGDCop = \DCopw$ are those of $\DCop$ as well. It being a monoidal model category, it satisfies the pushout-product axiom, so if $f: U \rarr V$ and $g: X \rarr Y$ are cofibrations in $\DCopw$, so is $f \Box g$. If in addition either of $f$ or $g$ is an i$\Gamma$-local equivalence (Def. \ref{Glocalequ}), so is $f \Box g$: without loss of generality, let's assume $f$ is a trivial cofibration in $\DCopw$. Now i$\Gamma$-local equivalences form an ideal class as shown in Section 5, which implies $U \otimes X \rarr V \otimes X$ is an i$\Gamma$-local equivalence as well, hence so is $\tilde{U} \otimes \tilde{X} \rarr \tilde{V} \otimes \tilde{X}$. $\otimes$ being a Quillen bifunctor on $\DCop$, for $\tilde{X}$ a cofibrant approximation to some object $X$ of $\DCop$, $\tilde{U} \otimes \tilde{X} \rarr \tilde{V} \otimes \tilde{X}$ is a cofibration in $\DCop$ if we take $\tilde{U} \rarr \tilde{V}$ a fibrant cofibrant approximation to $U \rarr V$ that is a cofibration in $\DCop$. Hence $\tilde{U} \otimes \tilde{X} \rarr \tilde{V} \otimes \tilde{X}$ is an i$\Gamma$-local equivalence and a cofibration, i.e. a trivial cofibration. Now:
\beq
\xymatrix{
\tilde{U} \otimes \tilde{X} \ar[d] \ar[r]^{triv.cof.} & \tilde{V} \otimes \tilde{X} \ar[d] \nonumber \\
\tilde{U} \otimes \tilde{Y} \ar[dr]_{triv.cof.} \ar[r]^-{triv.cof.} &\tilde{U} \otimes \tilde{Y} \coprod_{\tilde{U} \otimes \tilde{X}} \tilde{V} \otimes \tilde{X} \ar@{.>}[d]^{\simeq} \nonumber \\
&\tilde{V} \otimes \tilde{Y} \nonumber
}
\eeq
where the bottom horizontal map is a trivial cofibration since those are preserved under pushout, $\tilde{U} \otimes \tilde{Y} \rarr \tilde{V} \otimes \tilde{Y}$ is a trivial cofibration for the same reason that $\tilde{U} \otimes \tilde{X} \rarr \tilde{V} \otimes \tilde{X}$ is a trivial cofibration, hence the dotted arrow is an equivalence by the 2-3 property. \\

Finally it suffices to show $\tilde{U} \otimes \tilde{Y} \coprod_{\tilde{U} \otimes \tilde{X}} \tilde{V} \otimes \tilde{X}$ is a cofibrant approximation to $U\otimes Y \coprod_{U \otimes X} V \otimes X$. It suffices to show $\tilde{A} \coprod_{\tilde{B}} \tilde{C} \rarrequ A \coprod_B C$ in the event that $\tilde{B} \rarr \tilde{C}$ corresponds to $\tilde{U} \otimes \tilde{X} \rarr \tilde{V} \otimes \tilde{X}$, trivial cofibration as shown above. Then referring to the commutative diagram below:
\beq
\xymatrix{
B \ar[dd] \ar[rr] &&C \ar[dd] \\
&\tilde{B} \ar[ul]_{\simeq} \ar[dd] \ar[rr] && \tilde{C} \ar[ul]_{\simeq} \ar[dd] \\
A \ar[rr] && A \coprod_B C \\
& \tilde{A} \ar[ul]_{\simeq} \ar[rr] && \tilde{A} \coprod_{\tilde{B}}\tilde{C} \ar@{.>}[ul]
} \nonumber
\eeq
$\tilde{A} \rarr \tilde{A} \coprod_{\tilde{B}} \tilde{C}$ is a trivial cofibration as a pushout so a weak equivalence in particular. In the top square $B \rarr C$ is an equivalence by the 2-3 property, $C \rarr A \coprod_B C$ is a cofibration, $\DCopw$ is left proper by Thm \ref{Thm 4.1.1}, so $A \rarr A \coprod_B C$ is an equivalence as pushout of an equivalence along a cofibration. In the bottom square it follows using the 2-3 property that $\tilde{A} \coprod_{\tilde{B}} \tilde{C} \rarr A \coprod_B C$ is an equivalence.\\

Then we also need that for any $X$ cofibrant in $\LGDCop$, $Q1 \otimes X \rarr 1 \otimes X$ is a weak equivalence in $\LGDCop$, i.e. an i$\Gamma$-local equivalence. $X$ cofibrant in $\DCopw$ implies that it is cofibrant in $\DCop$ as well. Then since $\DCop$ is a monoidal model category, $Q1 \otimes X \rarrequ 1 \otimes X$, but equivalences are also i$\Gamma$-local equivalences by Prop. \ref{3.1.5 mod}, hence it is a weak equivalence in $\LGDCop$ as well. We conclude $\LGDCop$ is a monoidal model category, with the same tensor product $\otimes$ as $\DCop$, and $\uHomDCopw = \uHomDCop$.

\subsection{Covers}
We regard $\cD_0$ as the first of a chain of proper cellular, symmetric monoidal model categories $\cD_i$, the idea being that we are interested in maps $\DiCop \rarr \DipCop$ and their localizations. This is especially pertinent in the setting of Segal topos, since internal Homs as defined presently very much depend on the target category. By having a tower of such target categories, one thereby pushes further the internal theories we have, producing a tower of internal theories, levelwise. Practically, it suffices to determine at which level in this tower one wishes to work. Lower-level theories are in a sense more compact by construction, while higher theories are clearly more encompassing.\\

Denote by $h_i: \cD_i \rarr \cD_{i+1}$ a map of model categories, that is, a left Quillen functor. It induces:
\beq
\hiCop: \DiCop \rarr \DipCop \nonumber
\eeq
pointwise. It follows from Prop. 11.6.5 of \cite{Hi} that since our model categories are cellular, in particular cofibrantly generated, this map is also a left Quillen functor. We refer the reader to Def. 8.5.11 of \cite{Hi} for defining the left derived functor $\bL F$ of a left adjoint $F$, part of a Quillen adjunction. Recall that $\cC$ is $\cD_0$-enriched, which means it is also $\cD_i$-enriched by composition with the $h_i$'s. Define:
\beq
h^{(i)} = h_{i-1} \circ \cdots \circ h_0 \nonumber
\eeq
and
\beq
h_x^{(i)} = h^{(i-1)} \circ h_x \nonumber
\eeq
Let $\Gamma_i$ be the following class of maps in $\DiCop$:
\beq
\Gamma_i = \{h_y^{(i)} \oT G \rarr h_x^{(i)} \oT G \; | \; x \rarr y \in W(\cCop),\, G \in \DiCop \} \nonumber
\eeq
Define:
\beq
\bL \hiCop \Gamma_i = \{\bL\hiCop(g) \; | \; g \in \Gamma_i \} \nonumber
\eeq
then by Thm 3.3.20 of \cite{Hi}, we have a left Quillen functor:
\beq
\hiCop: L_{\Gamma_i}\DiCop \rarr L_{\bL \hiCop \Gamma_i} \DipCop \nonumber
\eeq
Observe as an aside that we have $\bL \hiCop \Gamma_i \subset \Gamma_{i+1}$. Indeed, for $g \in \Gamma_i$:
\beq
\bL \hiCop (g) = \hiCop(\tilde{C}_i(g)) = h_i \circ \tilde{C}_i(g) \nonumber
\eeq
$\tilde{C}_i$ cofibrant approximation functor. But we may as well enlarge $\Gamma_i$ to also contain $\tilde{C}_i(g)$'s since we use internal homotopy function complexes, and their image by $h_i$ is therefore in $\Gamma_{i+1}$ by definition. It follows the collection of i$\bL \hiCop \Gamma_i$-local objects contains i$\Gamma_{i+1}$-local objects, and consequently i$\bL \hiCop \Gamma_i$-local equivalences are i$\Gamma_{i+1}$-local equivalences. We are looking at the following picture:
\beq
\xymatrix{
\cD_i \ar@{~>}[d] \ar[r]^{h_i} & \cD_{i+1} \ar@{~>}[d]\nonumber \\
\DiCop \ar[d]_{L_{\Gamma_i}} \ar[r]^{\hiCop} &\DipCop \ar[d]^{L_{\bL \hiCop \Gamma_i}} \nonumber \\
L_{\Gamma_i} \DiCop \ar[r]^{\hiCop} &L_{\bL \hiCop \Gamma_i} \DipCop \nonumber
}
\eeq
If we introduce the right Quillen adjoint $\kipCop$ of $\hiCop$, then one can define its right derived form following Def. 8.5.11 of \cite{Hi}, and Thm 3.3.20 again gives us that we have a Quillen adjunction (here we assume $L_{\Gamma_i}\DiCop$ is right proper):
\beq
R_{\bR \kipCop \cK_{i+1}}L_{\Gamma_i} \DiCop \stackrel{\hiCop}{\adj_{\kipCop}} R_{\cK_{i+1}} L_{\bL \hiCop \Gamma_i} \DipCop \nonumber
\eeq
induced from:
\beq
\xymatrix{
L_{\Gamma_i} \DiCop \ar[d]_{R_{\bR \kipCop \cK_{i+1}}} \ar[r]^{\hiCop} & L_{\bL \hiCop \Gamma_i} \DipCop \ar[d]^{R_{\cK_{i+1}}} \nonumber \\
R_{\bR \kipCop \cK_{i+1}}L_{\Gamma_i} \DiCop \ar[r]_{\hiCop} & R_{\cK_{i+1}} L_{\bL \hiCop \Gamma_i} \DipCop \nonumber
}
\eeq
where $K_i$ is a class of objects of $\DiCopw = L_{\Gamma_i}\cD_i^{\cC^{\op}}$ and \newline $\cK_i = \{i K_i -\text{colocal equivalences } \}$. This presupposes that the left localization of $\DiCop$ is right proper, cellular. That it is cellular is shown when we prove that we have an internal left Bousfield localization, but whether it is right proper is not automatic. This is something we have to assume. In other terms this construction holds for symmetric monoidal model categories $\DiCop$ whose internal left Bousfield localizations are right proper.

\section{Foundational results}
\subsection{Elementary results}
We list a few results that will be useful in the sequel. First we have:
\beq
\uHom(1,A) \cong A \nonumber
\eeq
since for any $W$, $\Hom(W, \uHom(1,A)) \cong \Hom(W \otimes 1, A) \cong \Hom(W,A)$. We also have:
\beq
\Hom(1, \uHom(A,B)) \cong \Hom(A,B) \nonumber
\eeq
as it directly follows from the adjunction isomorphism $\Hom(1, \uHom(A,B)) \cong \Hom(1 \otimes A , B) \cong \Hom(A,B)$. From this we have the very useful:
\begin{equ} \label{3.1.1}
Internal equivalences $\uHom(\tilde{A},\hat{B}) \rarrequ \uHom(\tilde{A},\hat{C})$ imply classical equivalences $\Hom(\tilde{A},\hat{B}) \rarrequ \Hom(\tilde{A},\hat{C})$.
\end{equ}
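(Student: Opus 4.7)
The natural isomorphism $\Hom_{\DCop}(1, \uHom_{\DCop}(A, B)) \cong \Hom_{\DCop}(A, B)$ recorded immediately before the proposition realizes the classical hom-set as the set of global sections of the internal hom. The plan is to leverage this dimension-wise in the simplicial direction to reinterpret the classical hom-simplicial-set and then transport the given internal equivalence through $\Hom_{\DCop}(1, -)$.

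First I would check that the natural isomorphism is functorial in the second variable, which is immediate from naturality of the adjunction $-\otimes 1 \dashv \uHom_{\DCop}(1, -)$ together with the symmetric-monoidal identity $X \otimes 1 \cong X$. Evaluating at each simplicial level $n$ on the simplicial resolutions $\hat{B}$ and $\hat{C}$ one obtains identifications
\begin{equation*}
\Hom_{\DCop}(\tilde{A}, \hat{B})_\bullet \;\cong\; \Hom_{\DCop}(1, \uHom_{\DCop}(\tilde{A}, \hat{B}))_\bullet,
\end{equation*}
and similarly for $\hat{C}$. Under these identifications, the classical map of the proposition is exactly the level-wise image of the given internal map under $\Hom_{\DCop}(1, -)$.

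Second I would argue that this level-wise application of $\Hom_{\DCop}(1, -)$ converts the internal equivalence into a weak equivalence of simplicial sets. The internal equivalence is, by definition, a weak equivalence in $\DCopDop$ with its Reedy structure, hence a level-wise weak equivalence in $\DCop$. Because $\tilde{A}$ is cofibrant, $\hat{B}$ and $\hat{C}$ are simplicial resolutions with fibrant entries, and the unit $1$ is cofibrant (standing hypothesis), the simplicial objects $\uHom_{\DCop}(\tilde{A}, \hat{B})$ and $\uHom_{\DCop}(\tilde{A}, \hat{C})$ are Reedy fibrant in $\DCopDop$, and the associated simplicial sets $\Hom_{\DCop}(1, \uHom_{\DCop}(\tilde{A}, \hat{B}))_\bullet$ and $\Hom_{\DCop}(1, \uHom_{\DCop}(\tilde{A}, \hat{C}))_\bullet$ are Kan complexes. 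A Reedy weak equivalence between Reedy fibrant objects is sent by the dimension-wise application of a functor that preserves weak equivalences between fibrant objects (which $\Hom_{\DCop}(1, -)$ does in this setting, being the right adjoint to a left Quillen functor and having cofibrant test-object $1$) to a weak equivalence of Kan complexes, giving the desired classical equivalence.

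The main obstacle is precisely this last step: the set-valued functor $\Hom_{\DCop}(1, -)$ is not a Quillen functor in the strict sense (the target $\Set$ carries no interesting model structure), so preservation of weak equivalences has to be argued by hand, by combining Reedy fibrancy of the internal hom with cofibrancy of both $1$ and $\tilde{A}$ to reduce to the situation of a level-wise weak equivalence between Kan complexes. Once that is lined up, two-out-of-three and the natural isomorphism of the first step complete the argument.
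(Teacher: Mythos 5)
Your proposal is correct and follows essentially the same route as the paper: reduce via the natural isomorphism $\Hom_{\DCop}(1, \uHom_{\DCop}(\tilde{A},\hat{B})) \cong \Hom_{\DCop}(\tilde{A},\hat{B})$ and then transport the internal equivalence through the level-wise application of $\Hom_{\DCop}(1,-)$. You additionally spell out why this last step preserves the equivalence (Reedy fibrancy of the internal hom from the proposition of Section 3.4 together with the right-Quillen property of $\Hom_{\DCop}(1,-): \DCopDop \rarr \SetD$ for cofibrant unit), a justification the paper leaves implicit by simply asserting the top arrow of its square is already a weak equivalence.
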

\begin{proof}
It suffices to write:
\beq
\xymatrix{
\Hom(1, \uHom(\tilde{A},\hat{B})) \ar@{=}[d] \ar[r]^{\simeq} & \Hom(1, \uHom(\tilde{A},\hat{C})) \ar@{=}[d] \nonumber \\
	\Hom(\tilde{A},\hat{B}) \ar@{.>}[r]_{\simeq} &\Hom(\tilde{A}, \hat{C})
} \label{gequ implies equ}
\eeq
\end{proof}

A very important result is the following, a modified version of Thm 17.7.7 of \cite{Hi}:
\begin{17.7.7} \label{17.7.7 mod}
Let $g: X \rarr Y $ a map in $\DCop$. Then $g$ is a weak equivalence if and only if for all $W$ $g_*: \uHom(\tilde{W}, \hat{X}) \rarr \uHom(\tilde{W},\hat{Y})$ is a weak equivalence in $\DCopDop$, if and only if for all $Z$ in $\DCop$, $g^*: \uHom(\tilde{Y},\hat{Z}) \rarr \uHom(\tilde{X}, \hat{Z})$ is a weak equivalences in $\DCopDop$.
\end{17.7.7}
\newpage
\begin{proof}
Suppose $g:X \rarr Y$ is an equivalence. Then by the original Thm 17.7.7 of \cite{Hi}, we have the following top equivalence for any objects $W$ and $A$:
\beq
\xymatrix{
\Hom(\widetilde{W \otimes A}, \hat{X}) \ar@{=}[d] \ar[r]^{\simeq} &\Hom(\widetilde{W \otimes A}, \hat{Y}) \ar@{=}[d]\\
\Hom(\tilde{W} \otimes \tilde{A}, \hat{X}) \ar[d]_{\cong} \ar[r]^{\simeq} & \Hom(\tilde{W} \otimes \tilde{A}, \hat{Y}) \ar[d]^{\cong}\\
\Hom(\tilde{W}, \uHom(\tilde{A}, \hat{X})) \ar@{.>}[r]_{\simeq} & \Hom(\tilde{W}, \uHom(\tilde{A}, \hat{Y}))
} \nonumber
\eeq
and this being true for any $W$ it follows $\uHom(\tilde{A}, \hat{X}) \rarrequ \uHom(\tilde{A}, \hat{Y})$. Conversely, if this is true, that implies $\Hom(\tilde{A}, \hat{X}) \rarrequ \Hom(\tilde{A}, \hat{Y})$ by Prop. \ref{3.1.1}, hence an equivalence $g: X \rarrequ Y$ by the original Thm 17.7.7. For the second part of the theorem we proceed in like manner. For any $W$ and $Z$ we have:
\beq
\xymatrix{
\Hom(\widetilde{W \otimes Y}, \hat{Z}) \ar@{=}[d] \ar[r]^{\simeq} & \Hom(\widetilde{W \otimes X}, \hat{Z}) \ar@{=}[d] \nonumber \\
\Hom(\tilde{W} \otimes \tilde{Y}, \hat{Z}) \ar[d]^{\cong}  &\Hom(\tilde{W} \otimes \tilde{X}, \hat{Z}) \ar[d]^{\cong} \nonumber \\
\Hom(\tilde{W}, \uHom(\tilde{Y}, \hat{Z})) \ar@{.>}[r]^{\simeq} &\Hom(\tilde{W}, \uHom(\tilde{X}, \hat{Z})) \nonumber
}
\eeq
The reasoning for showing that $g$ is an equivalence if and only if $g^*$ is an equivalence is identical to the one above for showing that $g$ is an equivalence if and only if $g_*$ is one as well.
\end{proof}

\subsection{Yoneda}
We take the left Bousfield localization of $\DCop$ with respect to $\Gamma = \Gamma^0 \oT \DCop$, with $\Gamma^0 = \{ h_y \rarr h_x \; | \; x \rarr y \in W(\cCop) \}$. In particular, $F \in \DCop$ i$\Gamma$-local (see Def. \ref{Glocal}) is therefore i$\Gamma^0$-local, that is it satisfies:
\beq
\uHomDCop(\tilde{h_x}, \hat{F}) \rarrequ \uHomDCop(\tilde{h_y}, \hat{F}) \nonumber
\eeq
Recall that we initially wanted to have functors from $\cCop$ to $\cD_0$ that map equivalences to equivalences. To show that such local objects $F$ as just defined preserve equivalences will in a first time necessitate a Yoneda lemma for enriched functors. We use \cite{K} and \cite{McL}. Consider a $\cD_0$-functor $F: \cCop \rarr \cD_0$. Let $x \in \cC$. Then the morphism:
\beq
\phi_y: Fx \rarr \uHomD(h_x(y),Fy) \nonumber
\eeq
expresses $Fx$ as the end $\int_{y \in \cCop}\uHomD(h_x(y),Fy)$. Observe that we could have obtained this result using the $\cD_0$-module structure on $\DCop$. We have:
\beq
\map_{\DCop}(h_x,F) = \int_{y \in \cCop}\uHomD(h_x(y),Fy) \nonumber
\eeq
but we also have, with \eqref{intHomDCop}:
\begin{align}
\map_{\DCop}(h_x,F)&=\map_{\DCop}(h_x \otimes 1, F) \nonumber\\
&=\uHomDCop(1,F)(x) \nonumber \\
&=F(x) \nonumber
\end{align}
hence:
\beq
F(x) = \int_{y \in \cCop} \uHomD(h_x(y), Fy) \nonumber
\eeq
We improve on this result since we still have to connect this to an iRhfC.\\

\begin{Yoneda}
\beq
Fx \cong \int_{y \in \cCop}\uHomD(\tilde{h}_x(y),Fy) \label{enrichedYoneda}
\eeq
\end{Yoneda}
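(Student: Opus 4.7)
The plan is to deduce the statement from the enriched Yoneda identity $Fx \cong \int_{y \in \cCop} \uHomD(h_x(y), Fy)$ obtained just above via the $\cD_0$-module structure on $\DCop$, upgraded by replacing $h_x$ with its cofibrant approximation $\tilde{h}_x$. I will reinterpret both sides in terms of the enriched mapping space $\map_{\DCop}(X,Y) = \int_{y \in \cCop} \uHomD(X_y, Y_y) \in \cD_0$, so that the existing identity reads $Fx \cong \map_{\DCop}(h_x, F)$ and the target of the theorem becomes $Fx \simeq \map_{\DCop}(\tilde{h}_x, F)$.

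The first step is to observe that $\tilde{h}_x \rarr h_x$ is a weak equivalence between cofibrant objects in the injective model structure on $\DCop$: $\tilde{h}_x$ is cofibrant by choice, and pointwise $h_x(y) = \coprod_{\Hom_{\cC}(y,x)} 1$ is a coproduct of copies of the cofibrant unit of $\cD_0$, hence cofibrant in $\cD_0$, so $h_x$ is cofibrant in $\DCop$. The second step is to invoke the homotopical invariance of $\map_{\DCop}(-,F)$: since $\uHomD$ is a Quillen bifunctor ($\cD_0$ being a monoidal model category) and $\DCop$ carries a $\cD_0$-enriched model structure from Subsection 2.2.5, Ken Brown's lemma applied pointwise propagates through the end to show that $\map_{\DCop}(-,F)$ sends weak equivalences between cofibrant objects to weak equivalences in $\cD_0$, once $F$ is fibrant. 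Replacing $F$ by a fibrant approximation if necessary, this yields
\[
\int_{y \in \cCop}\uHomD(\tilde{h}_x(y),Fy) \;=\; \map_{\DCop}(\tilde{h}_x, F) \;\simeq\; \map_{\DCop}(h_x, F) \;\cong\; Fx,
\]
which is the claim.

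The main obstacle is the homotopical invariance step: an end of a diagram of pointwise weak equivalences is not automatically a weak equivalence, so the argument must be routed through the enriched mapping space, where invariance is built into the $\cD_0$-enrichment established in Subsection 2.2.5. A minor interpretive point is that the theorem writes $\cong$ rather than $\simeq$; I read this as equivalence in $\cD_0$, consistent with the paper's systematic use of iRhfCs in the surrounding discussion and with the application of this lemma, announced in the introduction, to the condition $\uHom(Qh_x, RF) \rarrequ \uHom(Qh_y, RF)$ characterizing equivalence-preserving functors.
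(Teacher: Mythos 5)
Your proof takes a genuinely different route from the paper's. The paper adapts Kelly's universal-property proof of the enriched Yoneda lemma: it considers an arbitrary $\cD_0$-natural family $\alpha_y^c : X \rarr \uHom(\tilde{h}_x(y), Fy)$, passes to adjoints, draws a triangle in which the adjoint of $\alpha_y^c$ is presented as a factorization through the weak equivalence $\tilde{h}_x(y) \rarr h_x(y)$, and invokes the classical proof to get a unique $\eta : X \rarr Fx$; the conclusion is an honest isomorphism $Fx \cong \int_y \uHom(\tilde{h}_x(y), Fy)$ with no fibrancy hypothesis on $F$. You instead read the conclusion homotopically: you observe (correctly, and usefully) that $h_x$ is itself cofibrant in the injective structure because each $h_x(y)$ is a coproduct of copies of the cofibrant unit, so $\tilde{h}_x \rarr h_x$ is a weak equivalence between cofibrant objects, and then you feed this through the homotopical invariance of the $\cD_0$-module mapping object $\map_{\DCop}(-,F)$ once $F$ is fibrant. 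What the paper's argument buys, if one accepts its factorization step, is an on-the-nose isomorphism for arbitrary $F$; what yours buys is a rigorous weak equivalence, at the cost of replacing $\cong$ by $\simeq$ and replacing $F$ by a fibrant object. Note that the paper's proof does have a soft spot your route sidesteps: it is not justified that an arbitrary $\cD_0$-natural $\alpha_y^c$ out of $\tilde{h}_x(y)$ factors through $h_x(y)$, which the drawn triangle presupposes. And your interpretation is actually consonant with the downstream use in Section 3.3, where the lemma is applied to the fibrant levels $\hat{F}_n$ of a simplicial resolution, so the hypothesis you add is available there. The one place you are a bit quick is the phrase that Ken Brown ``propagates through the end''; ends of pointwise weak equivalences are not weak equivalences in general, and the clean statement, which you gesture at, is that $\map_{\DCop}(-,F) : (\DCop)^{\op} \rarr \cD_0$ is right Quillen when $F$ is fibrant because $(\DCop,\otimes,\map_{\DCop},\text{exp})$ is a $\cD_0$-module model category, so a single application of Ken Brown's lemma to that functor, rather than a levelwise argument followed by a limit, is what you want.
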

\begin{proof}
This is an adaptation of the proof found in \cite{K}. Consider any $\cD_0$-natural map $ \alpha_y^c:X \rarr \uHom(\tilde{h}_x(y),Fy)$ whose adjoint we will represent by the same letter: $\alpha_y^c: \tilde{h}_x(y) \rarr \uHom(X,Fy)$. We have $\phi_y^c: Fx \rarr \uHom(\tilde{h}_x(y),Fy)$, with adjoint again represented by the same letter, $\phi_y^c: \tilde{h}_x(y) \rarr \uHom(Fx,Fy)$. We want to show there is a unique $\eta: X \rarr Fx$ such that $\alpha_y^c = \phi_y^c \eta$, and that will prove our result. In the adjoint picture, that amounts to showing there exists a unique $\eta$ such that the triangular diagram below commutes:
\beq
	\xymatrixcolsep{4pc}
\xymatrix{
\alpha_y: h_x(y) \ar[r] &\uHom(Fx,Fy) \ar[r]^-{\uHom(\eta,Fy)} & \uHom(X,Fy) \nonumber \\
\tilde{h}_x(y) \ar[u]_{\simeq} \ar@{.>}[ur]^{\phi_y^c} \ar@{.>}[urr]_{\alpha_y^c}
}
\eeq
but from the classical proof we know there is such an $\eta$, and it is unique, hence by composition we have our result.

\end{proof}

\subsection{Localization with respect to $\Gamma$ produces prestacks}
Our aim is to have functors $F: \cCop \rarr \cD_0$ that map equivalences to equivalences, for the sake of obtaining a notion of prestacks. This can be implemented using a Bousfield localization with respect to $\Gamma^0 = \{ h_y \rarr h_x \; | \; x \rarr y \in W(\cCop) \}$. To make this manifest, we needed an enriched Yoneda lemma that we covered in the previous section. With regards to the collection of maps we are using as base for our localization, note that we will localize with respect to $\Gamma = \Gamma^0 \oT \DCop$ instead, which will in particular imply a localization with respect to $\Gamma^0$. Some technical lemmas later will require such an enlargement. For the moment, a heuristic motivation for using $\Gamma$ goes as follows: $\Gamma^0$-local objects recognize morphisms $h_y \rarr h_x$ as weak equivalences via Hom objects. It is not much of a stretch to ask that $\Gamma^0$-local objects $F$ also satisfy $\Hom(\tilde{h_x} \oT \tilde{C}, \hat{F}) \rarrequ \Hom(\tilde{h_y} \oT \tilde{C}, \hat{F})$ for all $C$ as well, hence we consider $\Gamma$-local objects instead.\\ 

$F: \cCop \rarr \cD_0$ i$\Gamma$-local is i$\Gamma^0$-local, which means that for any element of $\Gamma^0$, we have:
\beq
\uHomDCop(\tilde{h}_x, \hat{F}) \rarrequ \uHomDCop(\tilde{h}_y, \hat{F}) \label{Gammalocal}
\eeq
where $\tilde{h}$ is a cofibrant approximation of $h$, and $\hat{F}$ is a simplicial resolution of $F$. Recall that for simplicial objects $\hat{X}$, we define the simplicial set $\uHom(A,\hat{X})$, as in \cite{Hi}:
\beq
\uHom(A,\hat{X})_n = \uHom(A,\hat{X}_n) \nonumber
\eeq
We show \eqref{Gammalocal} implies $Fx \rarrequ Fy$ if $x \rarr y$ is in $W$ for $F$ i$\Gamma^0$-local. Since $\uHomDCop(\tilde{h}_x, \hat{F}) \rarrequ \uHomDCop(\tilde{h}_y, \hat{F})$ is an equivalence in the Reedy model structure of $\DCopDop$, this means for all $[n] \in \Delta$, we have:
\beq
\xymatrix{
\uHomDCop(\tilde{h}_x, \hat{F})_n \ar@{=}[d] \ar[r]^{\simeq} & \uHomDCop(\tilde{h}_y, \hat{F})_n \
\ar@{=}[d]  \\
\uHomDCop(\tilde{h}_x, \hat{F}_n) \ar@{.>}[r]^{\simeq} &\uHomDCop(\tilde{h}_y, \hat{F}_n)
} \label{locwrtG}
\eeq
$\hat{F}$ being a simplicial resolution of $F$ means $cs_*(F) \rarrequ \hat{F}$, where $cs_*$ is the constant simplicial functor. This being a Reedy weak equivalence in $\DCopDop$, it follows that for all $[n] \in \Delta$, we have equivalences in $\DCop$: $F = (cs_*(F))_n \rarrequ \hat{F}_n$. Using this fact, along with \eqref{locwrtG}, \eqref{daguer} and the enriched Yoneda lemma \eqref{enrichedYoneda}, it follows that we have:
\beq
\xymatrix{
\hat{F}_n(x) \cong \int_{t \in \cCop}\uHomD(\tilde{h}_x(t), \hat{F}_n(t)) \ar[r]^{\simeq} &\hat{F}_n(y) \cong \int_{t \in \cCop}\uHomD(\tilde{h}_y(t), \hat{F}_n(t)) \nonumber \\
F(x) \ar[u]^{\simeq}_{\cD_0} \ar@{.>}[r]_{\simeq} &F(y)  \ar[u]^{\simeq}_{\cD_0}
}
\eeq
where the bottom equivalence follows from the 2-3 property. This shows i$\Gamma$-local objects map equivalences to equivalences.\\

\subsection{ Internal Hom is fibrant}
Crucial in all our work is the fact that internal homotopy function complexes are fibrant objects. This is needed to use Thm 17.7.7 to prove its modified version, Thm \ref{17.7.7 mod}. We state this as a result:
\begin{Fibrant}
Let $C$ be cofibrant in $\DCop$, $\hat{X}$ a simplicial resolution of $X \in \DCop$. Then $\uHomDCop(C, \hat{X})$ is fibrant in the Reedy model structure of $\DCopDop$.
\end{Fibrant}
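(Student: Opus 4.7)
The plan is to show that $\uHomDCop(C,-)\colon \DCop \to \DCop$ is a right Quillen functor and then apply it levelwise to the Reedy fibrant simplicial object $\hat{X}$, checking that this produces a Reedy fibrant simplicial object.

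First, I would invoke what was established in Section 2.2: the functor category $\DCop$ with the injective model structure and the pointwise tensor product is a monoidal model category, with internal Hom $\uHomDCop$ and with $\otimes$ a Quillen bifunctor. Since $C$ is cofibrant in $\DCop$, it follows that $-\otimes C\colon \DCop \to \DCop$ is left Quillen. Its right adjoint $\uHomDCop(C,-)\colon \DCop \to \DCop$ is therefore right Quillen; in particular it preserves fibrations, and being a right adjoint it preserves all small limits.

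Second, I would unpack the fact that $\hat{X}$ is a simplicial resolution of $X$: by definition $\hat{X}\in\DCopDop$ is Reedy fibrant, so for every $n\geq 0$ the matching map $\hat{X}_n \to M_n \hat{X}$ is a fibration in $\DCop$. To show that $\uHomDCop(C,\hat{X})$ is Reedy fibrant in $\DCopDop$, it suffices to verify that for each $n \geq 0$ its $n$-th matching map is a fibration in $\DCop$. Since the matching object $M_n(-)$ is computed as a finite limit over the matching category and $\uHomDCop(C,-)$ preserves all limits, there is a natural isomorphism
\beq
M_n\bigl(\uHomDCop(C,\hat{X})\bigr) \cong \uHomDCop\bigl(C, M_n\hat{X}\bigr) \nonumber
\eeq
under which the $n$-th matching map of $\uHomDCop(C,\hat{X})$ is identified with the image under $\uHomDCop(C,-)$ of the fibration $\hat{X}_n \to M_n\hat{X}$, hence is itself a fibration in $\DCop$. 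This concludes Reedy fibrancy.

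The main obstacle is purely verificational: one must justify the compatibility of $\uHomDCop(C,-)$ with the matching object construction and with the identification of matching maps. Both follow formally from $\uHomDCop(C,-)$ being a right adjoint, so no genuine homotopical work is required beyond recognizing $-\otimes C \dashv \uHomDCop(C,-)$ as a Quillen adjunction. This is the internal analogue of Hirschhorn's treatment of $\Map(-,-)$ in \cite{Hi}, and the argument transports without substantive change.
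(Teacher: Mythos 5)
Your proposal is correct and follows essentially the same route as the paper: reduce Reedy fibrancy of $\uHomDCop(C,\hat{X})$ to the $n$-th matching maps being fibrations, use the fact that $\uHomDCop(C,-)$ preserves limits to identify $M_n\uHomDCop(C,\hat{X})$ with $\uHomDCop(C,M_n\hat{X})$, and then use the Reedy fibrancy of $\hat{X}$ together with $C$ being cofibrant. The only cosmetic difference is that you invoke ``$\uHomDCop(C,-)$ is right Quillen, hence preserves fibrations'' directly, whereas the paper unpacks this into the explicit adjoint lifting argument against a trivial cofibration $D\rarr E$.
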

\begin{proof}
$\uHom(C, \hat{X})$ fibrant means $\uHom(C,\hat{X}) \rarr *$ is a fibration in $\DCopDop$, where $*$ denotes the terminal object of $\DCopDop$. This is true if for all $[n] \in \Delta$, we have:
\beq
\uHom(C,\hat{X})_n = \uHom(C,\hat{X}_n) \rarr * \times_{M_n *} M_n \uHom(C, \hat{X}) \nonumber
\eeq
is a fibration in $\DCop$, where $M_nA$ is the $n$-th matching object of $A$ (see \cite{Hi}). Given the definition of the matching object, the object on the right of this map simplifies as $M_n\uHom(C, \hat{X})$, which is equal to $\uHom(C, M_n \hat{X})$, since the matching object is a limit, and the internal Hom commutes with limits. Thus we seek to show that:
\beq
\uHom(C, \hat{X}_n) \rarr \uHom(C, M_n \hat{X}) \nonumber
\eeq
is a fibration in $\DCop$. Let $D \rarr E$ be a trivial cofibration in $\DCop$. We need a lift $\alpha$ in the commutative diagram below:
\beq
\xymatrix{
D \ar[d]_{\substack{triv \\ cof}} \ar[r] &\uHom(C,\hat{X}_n) \ar[d] \nonumber \\
E \ar@{.>}[ur]^{\alpha} \ar[r] &\uHom(C, M_n \hat{X}) \nonumber
}
\eeq
by adjunction this is equivalent to having a lift in the diagram below:
\beq
\xymatrix{
D \otimes C \ar[d] \ar[r] &X_n \ar[d]\nonumber \\
E \otimes C \ar@{.>}[ur] \ar[r] &M_n \hat{X} \nonumber
}
\eeq
but $C$ is cofibrant, $\otimes$ is a left Quillen functor, so $D \otimes C \rarr E \otimes C$ is a trivial cofibration, and $\hat{X}$ being a simplicial resolution, it is fibrant in $\DCopDop$, so we have such a lift, which completes the proof.
\end{proof}
The other internal homotopy function complex we work with is the one we consider after having taken a left Bousfield localization with respect to $\Gamma$, and we find ourselves in $\DCopw = L_{\Gamma}\DCop$. The iLhfC of interest is now $\uHomDCopw(\tilde{k_0}, \hat{F})$, where $k_0 \in K_0$, $\tilde{k_0}$ is a cosimplicial resolution of $k_0$, and $\hat{F}$ is a fibrant approximation to $F$. As a preliminary result, we prove:
\begin{higheradj} \label{3.4.2}
	For $\tilde{D}$ a cofibrant approximation to $D$ in $\DCopw$, $\hat{X}$ a fibrant approximation to $X$, $\tilde{k_0}$ a cosimplicial resolution of $k_0$, we have:
\beq
\HomDCopw(\tilde{D}, \uHomDCopw(\tilde{k_0}, \hat{X})) \cong \HomDCopw(\tilde{D} \otimes \tilde{k_0}, \hat{X}) \nonumber
\eeq
\end{higheradj}
\begin{proof}
Since for $\tilde{k_0}$ a cosimplicial resolution, we have $\uHom(\tilde{k_0}, A)_n = \uHom(\tilde{k_0}_n, A)$, and for $\hat{W}$ a simplicial resolution we have $\Hom(Y,\hat{W})_n = \Hom(Y,\hat{W}_n)$, it suffices to show this isomorphism of simplicial sets on components:
\begin{align}
\Hom(\tilde{D}, \uHom(\tilde{k_0}, \hat{X}))_n & =\Hom(\tilde{D}, \uHom(\tilde{k_0}, \hat{X})_n) \nonumber \\
&=\Hom(\tilde{D}, \uHom(\tilde{k_0}_n, \hat{X})) \nonumber \\
&\cong \Hom(\tilde{D} \otimes  \tilde{k_0}_n, \hat{X}) \nonumber \\
&=\Hom((\tilde{D} \otimes  \tilde{k_0})_n, \hat{X}) \nonumber \\
&=\Hom(\tilde{D} \otimes  \tilde{k_0}, \hat{X})_n \nonumber
\end{align}
\end{proof}
\begin{Fibrant2}
For $\tilde{k}_0$ a cofibrant approximation to $k_0$, $\hat{X}$ a simplicial resolution of $X$, objects of $\DCopw$, $\uHomDCopw(\tilde{k}_0, \hat{X})$ is fibrant in the Reedy model structure of $\DCopwDop$.
\end{Fibrant2}
\begin{proof}
It follows from the adjunction isomorphism, that the adjoint to:
\beq
\xymatrix{
h_0 \ar[d] \ar[r] &\uHom(\tilde{k_0}, \hat{X}) \ar[d]\nonumber \\
l_0 \ar[r] &\uHom(\tilde{t_0}, \hat{X}) \times_{\uHom(\tilde{t_0}, \hat{Y})} \uHom(\tilde{k_0}, \hat{Y}) \nonumber
}
\eeq
is the following diagram:
\beq
\xymatrix{
\tilde{t_0} \otimes l_0 \coprod_{\tilde{t_0} \otimes h_0} \tilde{k_0} \otimes h_0 \ar[d] \ar[r] &\hat{X} \ar[d] \nonumber \\
\tilde{k_0} \otimes l_0 \ar[r] &\hat{Y} \nonumber
}
\eeq
In particular if $L_n$ denotes the $n$th-latching object functor,
\beq
\xymatrix{
\tilde{C} \ar[d] \ar[r] &\uHom(\tilde{k_{0,n}}, \hat{X}) \ar[d]\nonumber \\
\tilde{D} \ar[r] &\uHom(L_n\tilde{k_0}, \hat{X}) \times_{\uHom(L_n\tilde{k_0}, \hat{Y})} \uHom(\tilde{k_{0,n}}, \hat{Y}) \nonumber
}
\eeq
which simplifies to:
\beq
\xymatrix{
\tilde{C} \ar[d] \ar[r] &\uHom(\tilde{k_0}, \hat{X}) \ar[d]\nonumber \\
\tilde{D} \ar[r] &M_n \uHom(\tilde{k_0}, \hat{X}) \nonumber
}
\eeq
if $\hat{Y} = *$, has:
\beq
\xymatrix{
L_n\tilde{k_0} \otimes \tilde{D} \coprod_{L_n\tilde{k_0} \otimes \tilde{C}} \tilde{k_{0,n}} \otimes \tilde{C} \ar[d] \ar[r] &\hat{X} \ar[d]  \\
\tilde{k_{0,n}} \otimes \tilde{D} \ar[r] &\star
}  \nonumber
\eeq
for adjoint. Now the map on the right is a fibration. If the one on the left is a trivial cofibration, we have a lift, and that would prove our claim. But $\tilde{C} \rarr \tilde{D}$ is a cofibration, $\otimes$ is a Quillen bifunctor since $\DCopwDop$ is a symmetric monoidal model category as argued in Section 2.2.4, hence $\tilde{k_0}$ being cofibrant in $\DCopwDop$ as a cosimplicial resolution, the functor $\tilde{k_0} \otimes - : \DCopw \rarr \DCopwDop$ is a left Quillen functor, so preserves trivial cofibrations, hence $\tilde{k_0} \otimes \tilde{C} \rarr \tilde{k_0} \otimes \tilde{D}$ is a trivial cofibration in $\DCopwDop$, which means exactly that the left vertical map in the above commutative diagram is a trivial cofibration in $\DCopw$ by definition of cofibrations in Reedy model categories. This completes our proof.
\end{proof}

\subsection{Simplicial resolution} \label{simpres}
In this subsection, we show the following fact which is implied in the proof of Prop. \ref{17.7.7 mod}:
\begin{HomAXsimpres}
If $\hat{X}$ is a simplicial resolution of $X$, $\uHom(A,\hat{X})$ is a simplicial resolution of $\uHom(A,X)$.
\end{HomAXsimpres}
\begin{proof}	
This follows from Prop. 17.4.16 of \cite{Hi}, which states that if $F: \cM \rlarr \cN:G$ is a Quillen adjunction, $X$ is cofibrant in $\cM$, $Y$ is fibrant in $\cN$, $\hat{Y}$ is a simplicial resolution of $Y$, then $G(\hat{Y})$ is a simplicial resolution of $G(Y)$. We apply this to the case:
\beq
-\otimes A: \DCop \rlarr \DCop: \uHom(A,-)=G \nonumber
\eeq
Let $\hat{Y}$ be a simplicial resolution of $Y$ fibrant. Then $G(\hat{Y}) = \uHom(A,\hat{Y})$ is a simplicial resolution of $G(Y) = \uHom(A,Y)$. For any $Y$, take a fibrant replacement of $\uHom(A,Y)$:
\begin{align}
\uHom(A,Y) \rarrequ R\uHom(A,Y) &=RG(Y) \nonumber \\
&=G(RY) = \uHom(A,RY) \nonumber
\end{align}
so that we have $\uHom(A,Y) \rarrequ \uHom(A,RY)$, which implies:
\beq
cs_* \uHom(A,Y) \rarrequ cs_* \uHom(A,RY) \rarrequ \uHom(A,\hat{Y}) \nonumber
\eeq
Here $\hat{Y}$ is a simplicial resolution of $RY$. But $Y \rarrequ RY$ implies $cs_*Y \rarrequ cs_* RY \rarrequ \hat{Y}$, so $\hat{Y}$ is also a simplicial resolution of $Y$. For such a simplicial resolution, we have $\uHom(A,\hat{Y})$ is a simplicial resolution of $\uHom(A,Y)$ as shown above. This is independent of the choice of simplicial resolution since all such resolutions are Reedy equivalent by Lemma 16.1.22 of \cite{Hi}.
\end{proof}
In the same manner we would show that if $\tilde{Y}$ is a cosimplicial resolution of $Y$, then $\uHom(\tilde{Y},B)$ is a simplicial resolution of $\uHom(Y,B)$.\\

\subsection{Equivalence of internal homotopy function complexes}
Our internal homotopy function complexes are defined as the homotopy function complexes of \cite{Hi}, save that instead of using Hom sets, we use internal Homs. We want those internal homotopy function complexes to be independent of the choice of resolutions and approximations used in defining them. We first need a couple of definitions, variants of those found in \cite{Hi}:
\begin{17.2.5}
	A \textbf{change of iRhfC map}: \index{change! of iRhfc map}
\beq
(\tilde{X}, \hat{Y}, \uHom(\tilde{X},\hat{Y})) \rarr (\tilde{X'}, \hat{Y'}, \uHom(\tilde{X'},\hat{Y'})) \nonumber
\eeq
is a triple $(f,g,h)$ formed of a map of cofibrant approximations $f: \tilde{X} \rarr \tilde{X'}$, a map of simplicial resolutions $g: \hat{Y} \rarr \hat{Y'}$ and the map of simplicial objects $h: \uHom(\tilde{X}, \hat{Y}) \rarr \uHom(\tilde{X'}, \hat{Y'})$ induced by $f$ and $g$.
\end{17.2.5}
\begin{17.2.7}
	For $X,Y \in \DCop$, we define the \textbf{category $iRhfC(X,Y)$} to be the category of iRhfC's from $X$ to $Y$ and with changes of iRhfC maps as morphisms. \label{iRhfCXY}
\end{17.2.7}
\begin{17.2.11}
Let $X,Y \in \DCop$. Then any two iRhfC's from $X$ to $Y$ are connected by an essentially unique zig-zag of changes of iRhfC maps.
\end{17.2.11}
\begin{proof}
This follows from Thm 14.4.5 of \cite{Hi}, which states that if $\cC$ is a category, $X,Y \in \cC$, $B\cC$ is contractible, then there exists an essentially unique zig-zag from $X$ to $Y$ in $\cC$, and the proposition below, a modified version of Prop. 17.2.10 of \cite{Hi} to the internal setting.
\end{proof}
\begin{17.2.10}
Let $X,Y \in \DCop$. Then $B iRhfC(X,Y)$ is contractible.
\end{17.2.10}
\begin{proof}
We have $B iRhfC(X,Y) = B sRes(\uHom(\tilde{X},Y))$ by Section \ref{simpres}, where $sRes$ stands for simplicial resolution, and this latter category is contractible by Prop. 16.1.5 of \cite{Hi}.
\end{proof}

\newpage

\section{The cardinal $\gamma$ in the proof of Prop. 4.5.1}
To prove that we have an internal left Bousfield localization, we use Thm 11.3.1 of \cite{Hi}, which itself needs Prop. 4.5.1 that we generalize to our setting. The proof of the latter proposition in the classical case uses a cardinal $\gamma$. Following Def. 4.5.3 of \cite{Hi}, $\gamma = \xi^{\xi}$, where $\xi$ is the smallest cardinal that is at least as large as any of the cardinals that are the subject of the following five sections. We use the same definitions in the generalized case.\\

\subsection{Size of the cells of $\DCop$}
By definition, the size of the cells of a cellular model category $\cM$ is the smallest cardinal for which Thm 12.3.1 of \cite{Hi} holds. This theorem makes no use of a notion of equivalence, and can be used as is, hence holds also in the internal setting. Hence we can define the size of the cells of $\DCop$ following that result.\\

\subsection{Compactness of the domains of $I$}
$\DCop$ is a cellular model category, in particular cofibrantly generated, and let $I$ denote its set of generating cofibrations. Let $\eta$, as in \cite{Hi}, be a cardinal such that the domains of elements of $I$ are $\eta$-compact.\\

\subsection{Cardinal $\lambda$ in the proof of Thm 4.3.1}
As in \cite{Hi}, let $\lambda$ be the cardinal used in the proof of Thm 4.3.1. This result invokes a set $\wLG$, originally introduced in Prop. 4.2.5 of \cite{Hi}, which uses equivalences, and therefore needs to be stated and proved in the internal setting. To prove it, we invoke the equivalence between $\Gamma$-local equivalences and i$\Gamma$-local equivalences (Lemma \ref{lequ implies glequ}). In the proof of the original Proposition, one result is interesting in its own right, and we prove it in the internal case, it is Prop. \ref{3.1.5 mod} below. We need it in Section 2.2.5.\\

\newpage

\begin{3.1.5} \label{3.1.5 mod}
Let $\Gamma$ be a class of maps in $\DCop$. Then any weak equivalence in $\DCop$ is also a i$\Gamma$-local equivalence.
\end{3.1.5}
\begin{proof}
Let $A \rarr B$ be a weak equivalence in $\DCop$. If $X$ is an i$\Gamma$-local object, we want $\uHom(\tilde{B}, \hat{X}) \rarrequ \uHom(\tilde{A}, \hat{X})$, $\tilde{A}$ a cofibrant approximation to $A$, and $\hat{X}$ a simplicial resolution to $X$. For $C$ cofibrant in $\DCop$, we have:
\beq
\xymatrix{
\Hom(C, \uHom(\tilde{B}, \hat{X})) \ar[d]^{\cong} \ar[r] &\Hom(C, \uHom(\tilde{A}, \hat{X})) \ar[d]^{\cong} \nonumber \\
\Hom(C \otimes \tilde{B}, \hat{X})  \ar[r] & \Hom(C \otimes \tilde{A}, \hat{X})  \nonumber \\
}
\eeq
Now we use the fact that $\otimes$ being a left Quillen functor, $C$ being cofibrant, if $\tilde{A} \rarr \tilde{B}$ is a fibrant cofibrant approximation to $A \rarr B$ that is a cofibration, then $C \otimes \tilde{A} \rarr C \otimes \tilde{B}$ is a trivial cofibration as well, in particular it is a weak equivalence, so by the original result of \cite{Hi}, it is a $\Gamma$-local equivalence, so that the bottom horizontal map above is an equivalence, since $X$ i$\Gamma$-local is also $\Gamma$-local by Lemma \ref{gloc implies loc}. It follows from the above commutative diagram and Thm 17.7.7 of \cite{Hi} that we have an equivalence $\uHom(\tilde{B}, \hat{X}) \rarr \uHom(\tilde{A}, \hat{X})$ in $\DCopDop$, that is $A \rarr B$ is an i$\Gamma$-local equivalence.
\end{proof}

\begin{4.2.5} \label{4.2.5 mod}
If $I$ denotes the set of generating cofibrations of $\DCop$, $\Gamma$ is a class of maps in $\DCop$, then there exists a set $\wLG$ of relative $I$-cell complexes whose domains are cofibrant, such that every element of $\wLG$ is an i$\Gamma$-local equivalence, and an object $W$ is i$\Gamma$-local if and only if $W \rarr *$ is a $\wLG$-injective.
\end{4.2.5}
\begin{proof}
By Lemmas \ref{gloc implies loc} and \ref{lequ implies glequ}, this is equivalent to the original Proposition.
\end{proof}

The cardinal in Thm 4.3.1 of \cite{Hi} is $\lambda = succ(\kappa)$, $\kappa$ a cardinal which according to Lemma 10.4.6 of \cite{Hi} is such that the domain of every element of $\wLG$ is $\kappa$-small relative to the subcategory of relative $\wLG$-cell complexes. This is a classical notion and needs not be generalized.\\

\newpage

\subsection{Cardinal $\kappa$ in Prop. 12.5.3 of \cite{Hi}}
This proposition is applied to the set $\wLG$. It mentions a cardinal $\kappa$ at least as large as four kinds of cardinals, two of which are cardinals given by Prop. 12.5.2 of \cite{Hi}, which makes use of a Hom set. We generalize this Proposition presently:
\begin{12.5.2} \label{12.5.2 mod}
If $X$ is a cofibrant object of $\DCop$, then there is a cardinal $\eta$ such that for $\nu \geq 2$ a cardinal, $Y$ a cell complex of size $\nu$, $\uHom(X,Y)$ has cardinality at most $\nu^{\eta}$.
\end{12.5.2}
\begin{proof}
Let $C$ be cofibrant in $\DCop$. We have $\Hom(C, \uHom(X,Y)) \cong \Hom(C \otimes X, Y)$. Since we are in a monoidal model category, $C \otimes X$ is again cofibrant. We apply the original Proposition of \cite{Hi} to $C \otimes X$, cofibrant, and $Y$, which gives $size(\Hom(C \otimes X, Y)) \leq \nu^{\eta}$. Finally, $size(\uHom(X,Y)) < size(\Hom(C, \uHom(X,Y)) = size(\Hom(C \otimes X, Y))$, which completes the proof.
\end{proof}

\subsection{Cardinal $\kappa$ in Prop. 12.5.7 of \cite{Hi}}
$\kappa$ is an infinite cardinal at least as large as four types of cardinals, two of which are given by Prop. \ref{12.5.2 mod} above, and one of which is given by Def. 12.5.5 of \cite{Hi}, which invokes a smallness argument, hence does not need to be modified.

\newpage

\section{Results needed for an internal left Bousfield Localization}

The following result is the first one that is needed to prove that we do have an internal left Bousfield localization:\\
\begin{3.2.3} \label{3.2.3 mod}
For $\Gamma$ a class of maps in $\DCop$, the class of i$\Gamma$-local equivalences satisfies the 2-3 property.
\end{3.2.3}
\begin{proof}
This is just a generalization of Hirschhorn's proof in \cite{Hi}: let $g:X \rarr Y$, $h:Y \rarr Z$ be maps, apply a functorial cofibrant factorization to those:
\beq
\xymatrix{
\tilde{X} \ar[d] \ar[r]^{\tilde{g}} &\tilde{Y} \ar[d] \ar[r]^{\tilde{h}} & \tilde{Z} \ar[d] \nonumber \\
X \ar[r]^{g} &Y \ar[r]^{h} &Z \nonumber
}
\eeq
where $\tilde{g}$, $\tilde{h}$ and $\tilde{h}\tilde{g}$ are cofibrant approximations to $g$, $h$ and $hg$ respectively. Those exist by virtue of Prop. 8.1.23 of \cite{Hi}. Let $W$ be an i$\Gamma$-local object, $\hat{W}$ a simplicial resolution of $W$. To say for example that $g$ is an i$\Gamma$-local equivalence would mean:
\beq
\uHomDCop(\tilde{Y},\hat{W}) \rarrequ \uHomDCop(\tilde{X}, \hat{W}) \nonumber
\eeq
is an equivalence in $\DCopDop$, where equivalences satisfy the 2-3 property, so if two of $\tilde{g}^*: \uHom(\tilde{Y}, \hat{W}) \rarr \uHom(\tilde{X}, \hat{W})$, $\tilde{h}^*: \uHom(\tilde{Z}, \hat{W}) \rarr \uHom(\tilde{Y}, \hat{W})$ or $(\tilde{h}\tilde{g})^*: \uHom(\tilde{Z}, \hat{W}) \rarr \uHom( \tilde{X}, \hat{W})$ is a weak equivalence, so is the third, which completes the proof.
\end{proof}

In the statement of Thm 11.3.1 of \cite{Hi}, mention is made of a set $J$, which exists by virtue of Prop. 4.5.1 of the same reference, which we generalize presently. Its proof uses three results of \cite{Hi}, two of which make use of a notion of equivalence, and therefore have to be generalized. Their proof needs the following definition, along with two lemmas, Lemma \ref{gloc implies loc} and Lemma \ref{lequ implies glequ} which we state and prove after those two results.
\begin{ideal}
	A class of maps $S$ in a symmetric monoidal model category $\cM$ is said to be an \textbf{ideal class} \index{map! ideal class} of maps if for all $f:A \rarr B$ in $S$, for all object $C$ of $\cM$, $C \otimes A \rarr C \otimes B$ is also in $S$.
\end{ideal}
Observe that the set of i$\Gamma$-local equivalences and the set of $\Gamma$-local equivalences form ideal classes:\\

\begin{iGammalocequideal}
The class of i$\Gamma$-local equivalences forms an ideal class.
\end{iGammalocequideal}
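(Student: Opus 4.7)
The plan is to reduce the ideal-class property for i$\Gamma$-local equivalences to the single auxiliary claim that if $W \in \DCop$ is i$\Gamma$-local, then so is $\uHom(C,W)$ for every object $C$ of $\DCop$. Granting this, the lemma follows by a single adjunction. Let $f:A \rarr B$ be an i$\Gamma$-local equivalence and pick cofibrant approximations $\tilde{C}, \tilde{A}, \tilde{B}$, with $\tilde{A} \rarr \tilde{B}$ a cofibration approximating $f$. As in the proof of Proposition \ref{3.1.5 mod}, $\tilde{C} \otimes \tilde{A} \rarr \tilde{C} \otimes \tilde{B}$ is then a cofibrant approximation to $C \otimes A \rarr C \otimes B$. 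For any i$\Gamma$-local $W$ with simplicial resolution $\hat{W}$, the closed-monoidal adjunction in $\DCop$, applied in each simplicial degree, yields an isomorphism in $\DCopDop$, $\uHom(\tilde{C} \otimes \tilde{X}, \hat{W}) \cong \uHom(\tilde{X}, \uHom(\tilde{C}, \hat{W}))$ for $\tilde{X} \in \{\tilde{A}, \tilde{B}\}$. By Section \ref{simpres}, $\uHom(\tilde{C}, \hat{W})$ is a simplicial resolution of $\uHom(C,W)$; assuming the auxiliary claim, the latter is i$\Gamma$-local, so the defining condition of $f$ applied to the new target $\uHom(C,W)$ produces $\uHom(\tilde{B}, \uHom(\tilde{C}, \hat{W})) \rarrequ \uHom(\tilde{A}, \uHom(\tilde{C}, \hat{W}))$, which via the adjunction is precisely the statement that $C \otimes f$ is an i$\Gamma$-local equivalence.

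For the auxiliary claim, fix $h_y \rarr h_x \in \Gamma$ and compare $\uHom(\tilde{h}_x, \uHom(\tilde{C}, \hat{W}))$ with $\uHom(\tilde{h}_y, \uHom(\tilde{C}, \hat{W}))$ in $\DCopDop$. Two applications of the closed-monoidal adjunction together with the symmetry of $\otimes$ identify this comparison, in each simplicial degree and hence in $\DCopDop$, with $\uHom(\tilde{C}, \uHom(\tilde{h}_x, \hat{W})) \rarr \uHom(\tilde{C}, \uHom(\tilde{h}_y, \hat{W}))$. The inner map $\uHom(\tilde{h}_x, \hat{W}) \rarr \uHom(\tilde{h}_y, \hat{W})$ is a Reedy equivalence by i$\Gamma$-locality of $W$, and both its source and target are Reedy fibrant by the fibrancy result for internal right homotopy function complexes established in Section 3.4. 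Since $\tilde{C}$ is cofibrant, $\uHom(\tilde{C}, -)$ is right Quillen on $\DCop$; applied in each simplicial degree it sends Reedy equivalences between Reedy-fibrant objects to Reedy equivalences, by Ken Brown's lemma on each degree. This produces the required equivalence and establishes i$\Gamma$-locality of $\uHom(C,W)$.

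The main technical obstacle is bookkeeping around resolutions and approximations: one must verify that $\uHom(\tilde{C}, \hat{W})$ can legitimately be treated as $\widehat{\uHom(C, W)}$, which requires arranging $\tilde{C}$ cofibrant and $W$ fibrant so that the simplicial-resolution-preservation argument of Section \ref{simpres} applies, and that the closed-monoidal adjunction, a priori a natural isomorphism of Hom sets, promotes to an isomorphism of simplicial objects in $\DCopDop$ by naturality in the simplicial degree. Beyond this no new homotopical input is needed: the argument relies only on the closed symmetric monoidal structure of $\DCop$, the Reedy fibrancy of internal right homotopy function complexes, and the simplicial-resolution-preservation recorded in Section \ref{simpres}.
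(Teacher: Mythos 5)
Your proof is correct but follows a genuinely different route from the paper's. The paper argues in a single chain: it externalizes the hypothesis by applying $\Hom(\tilde{Z}\otimes\tilde{C},-)$ to the given Reedy equivalence $\uHom(\tilde{Y},\hat{W})\rarrequ\uHom(\tilde{X},\hat{W})$, shuffles $\tilde{C}$ across the internal hom via two tensor--hom adjunctions, and re-internalizes by quantifying over all $Z$ and invoking Theorem 17.7.7, never naming $\uHom(C,W)$ as an object of interest. You instead isolate an auxiliary claim --- that the class of i$\Gamma$-local objects is closed under $\uHom(\tilde{C},-)$ for $\tilde{C}$ cofibrant --- and deduce the ideal-class property by a single application of the defining condition of $f$ to the new local target $\uHom(\tilde{C},W)$, whose simplicial resolution $\uHom(\tilde{C},\hat{W})$ is furnished by Section 3.5. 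The underlying homotopical inputs are the same (Reedy fibrancy of iRhfC's from Section 3.4, $\uHom(\tilde{C},-)$ right Quillen plus Ken Brown, preservation of resolutions under $\uHom$), but your decomposition has a structural dividend: the auxiliary claim is precisely the cotensor twin of the paper's Lemma on i$\cK$-colocal objects forming an ideal class under $\otimes$, a reusable fact the paper never records on the local side, and it lets you avoid re-internalizing through Theorem 17.7.7. One bookkeeping point, which you yourself flag: the i$\Gamma$-local object must be written $\uHom(\tilde{C},W)$ rather than $\uHom(C,W)$, since Definition \ref{Glocal} requires fibrancy, which needs $\tilde{C}$ cofibrant and $W$ fibrant; with that adjustment made consistently the argument goes through cleanly.
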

\begin{proof}
Let $f:X \rarr Y$ be an i$\Gamma$-local equivalence, let $C$ be any object. We show $C \otimes f$ is also an i$\Gamma$-local equivalence, that is if $W$ is an i$\Gamma$-local object, $\uHom(\tilde{Y}, \hat{W}) \rarrequ \uHom(\tilde{X}, \hat{W})$ implies $\uHom(\tilde{C} \otimes \tilde{Y}, \hat{W}) \rarrequ \uHom(\tilde{C} \otimes \tilde{X}, \hat{W})$. Let $Z$ be any object. We have:
\beq
\xymatrix{
	\Hom(\tilde{Z}, \uHom(\tilde{C} \otimes \tilde{Y}, \hat{W})) \ar[d]^{\cong} \ar@{.>}[ddddr]_{\simeq}  \\
\Hom(\tilde{Z} \otimes \tilde{C} \otimes \tilde{Y}, \hat{W}) \ar[r]^{\cong} & \Hom(\tilde{Z} \otimes \tilde{C}, \uHom(\tilde{Y}, \hat{W})) \ar[d]^{\simeq} \\
&\Hom(\tilde{Z} \otimes \tilde{C}, \uHom(\tilde{X}, \hat{W})) \ar[d]^{\cong} \\
	&\Hom(\tilde{Z} \oT \tilde{C} \oT \tilde{X}, \hat{W}) \ar[d]^{\cong} \\
&\Hom(\tilde{Z}, \uHom(\tilde{C} \otimes \tilde{X}, \hat{W}))
} \nonumber
\eeq
and this for any $Z$ shows by Thm 17.7.7 of \cite{Hi} that we have our desired equivalence, hence $C \otimes f$ is an i$\Gamma$-local equivalence.
\end{proof}
The following lemma is the first one that necessitates that we use $\Gamma$ as collection of morphisms for localization purposes, and not simply $\Gamma^0$.
\begin{Gammalocequideal}
The class of $\Gamma$-local equivalences forms an ideal class.
\end{Gammalocequideal}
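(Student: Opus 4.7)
The plan is to mirror the proof of Lemma \ref{iGammalocequideal}, with classical Hom simplicial sets in place of internal Hom objects. Let $f: X \to Y$ be a $\Gamma$-local equivalence and $C \in \DCop$. Fixing a $\Gamma$-local object $W$ with simplicial resolution $\hat{W}$, the goal is to show
\[
\Hom(\widetilde{C \otimes Y}, \hat{W}) \Lrarr \Hom(\widetilde{C \otimes X}, \hat{W})
\]
is a weak equivalence of simplicial sets. Using the identification $\widetilde{C \otimes A} = \tilde{C} \otimes \tilde{A}$ justified in the proof of Proposition \ref{3.1.5 mod} and the tensor-hom adjunction $\HomDCop(\tilde{C} \otimes \tilde{A},-) \cong \HomDCop(\tilde{A}, \uHom(\tilde{C},-))$ applied levelwise in $\Delta$, this rewrites as
\[
\Hom(\tilde{Y}, \uHom(\tilde{C}, \hat{W})) \Lrarr \Hom(\tilde{X}, \uHom(\tilde{C}, \hat{W})).
\]
By Section \ref{simpres}, $\uHom(\tilde{C}, \hat{W})$ is a simplicial resolution of $\uHom(\tilde{C}, W)$, so the $\Gamma$-local equivalence hypothesis on $f$ supplies the required weak equivalence provided $\uHom(\tilde{C}, W)$ is $\Gamma$-local.

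The main obstacle is this final $\Gamma$-locality check. Unwinding the definition via another adjunction chase, it amounts to proving, for each $x \to y \in W^{op}$, that
\[
\Hom(\tilde{h}_x \otimes \tilde{C}, \hat{W}) \Lrarr \Hom(\tilde{h}_y \otimes \tilde{C}, \hat{W})
\]
is a weak equivalence of simplicial sets, which is precisely the ideal-class conclusion applied to the generating maps $h_y \to h_x$ of $\Gamma$ -- apparently circular. To break the circularity, I would invoke Lemma \ref{iGammalocequideal} applied to $h_y \to h_x$: since this generator is an i$\Gamma$-local equivalence by construction of $\LGDCop$, so is $\tilde{C} \otimes \tilde{h}_y \to \tilde{C} \otimes \tilde{h}_x$, yielding $\uHom(\tilde{C} \otimes \tilde{h}_x, \hat{W}) \rarrequ \uHom(\tilde{C} \otimes \tilde{h}_y, \hat{W})$ against any i$\Gamma$-local target.

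The delicate remaining point -- and the actual hard part of the argument -- is that the given $W$ is only assumed $\Gamma$-local, not i$\Gamma$-local, so this internal conclusion must still be transferred to a classical equivalence detectable by $W$ itself. The plan is to do so by first fibrantly replacing $W$ in $\DCopw$ to obtain an i$\Gamma$-local $W'$ (for which the equivalence above applies), then using the adjunction isomorphism $\Hom(1, \uHom(A,B)) \cong \Hom(A,B)$ in the spirit of Proposition \ref{3.1.1} (applied now with the first argument varying) to convert the internal Hom equivalence into the corresponding classical Hom equivalence, and finally comparing with the original $W$ through the resolution identifications of Section \ref{simpres}. Together with Theorem \ref{17.7.7 mod} and the adjunction chain above, this establishes that $\uHom(\tilde{C}, W)$ is $\Gamma$-local and closes the proof.
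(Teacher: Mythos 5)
Your route is genuinely different from the paper's, and as you yourself suspect, it runs into a circularity that it does not resolve. The paper's proof is direct: take a cofibrant approximation $\tilde A\to\tilde B$ of $A\to B$ that is a cofibration, and tensor with a cofibrant $\tilde C$; since $\otimes$ is a Quillen bifunctor on $\DCop$ this is claimed to give a trivial cofibration, which is then a weak equivalence and hence, by Hirschhorn's Proposition 3.1.5, a $\Gamma$-local equivalence, and this implies $C\otimes A\to C\otimes B$ is one too. The entire argument lives inside $\DCop$, never invokes internal Homs, never touches the localized model structure, and never uses Lemma \ref{gloc implies loc} or Lemma \ref{lequ implies glequ}.

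Your argument, by contrast, reduces to showing that $\uHom(\tilde C,W)$ is $\Gamma$-local and then attempts to import Lemma \ref{iGammalocequideal}. The difficulty you flag (``the actual hard part'') is a genuine gap, and the fix you propose does not close it. First, ``fibrantly replacing $W$ in $\DCopw$'' presupposes the internal left Bousfield localization $\DCopw$; but that model structure is only constructed in Section 6, and its construction uses precisely the results of Section 5, including the very lemma you are proving. You cannot use the localized model structure here. Second, even if you replace $\DCopw$ by the classical Hirschhorn localization $L_\Gamma\DCop$ (which does exist), the fibrant replacement of $W$ there is only known to be $\Gamma$-local, and to upgrade it to i$\Gamma$-local you need the equivalence of the two notions of locality --- which is Lemma \ref{gloc implies loc}, stated later in the paper and proved using Lemma \ref{Gammalocequideal}. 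That is a circular dependency, not a step that happens to be hard. The paper makes the lemma logically prior to \ref{gloc implies loc} and \ref{lequ implies glequ} precisely so those arguments can cite it; any proof of Lemma \ref{Gammalocequideal} must therefore work entirely with classical Hom-sets inside $\DCop$, as the paper's one-line argument does, rather than routing through the internal statements and then trying to descend.
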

\begin{proof}
Let $f:X \rarr Y$ be a $\Gamma$-local equivalence, $C$ any object of $\DCop$. We show $C \oT f$ is also a $\Gamma$-local equivalence, that is for any $W$ $\Gamma$-local object, we have $\Hom(\widetilde{C \oT Y}, \hat{W}) \rarrequ \Hom(\widetilde{C \oT X}, \hat{W})$. Since $\Hom(\widetilde{C \oT X},\hat{W}) \cong \Hom(\tilde{X}, \uHom(\tilde{C}, \hat{W}))$, it suffices to show $\uHom(\tilde{C},\hat{W})$ is a $\Gamma$-local object for all $C$, and $W$ $\Gamma$-local. Consider:
	\beq
	\xymatrix{
		\Hom(\tilde{h_x} \oT \tilde{D}, \uHom(\tilde{C}, \hat{W})) \ar[d]_{\cong} \ar[r] & \Hom(\tilde{h_y} \oT \tilde{D}, \uHom(\tilde{C}, \hat{W})) \ar[d]^{\cong} \\
		\Hom(\tilde{h_x} \oT \tilde{D} \oT \tilde{C}, \hat{W}) \ar[d]_{\cong} \ar[r] & \Hom(\tilde{h_y} \oT \tilde{D} \oT \tilde{C}, \hat{W}) \ar[d]^{\cong} \\
	\Hom(\tilde{C}, \uHom(\tilde{h_x} \oT \tilde{D}, \hat{W})) \ar[r]^{\simeq} & \Hom(\tilde{C}, \uHom(\tilde{h_y} \oT \tilde{D}, \hat{W}))
		} \nonumber
	\eeq
Because $W$ is $\Gamma$-local, $\uHom(\tilde{h_y} \oT \tilde{D}, \hat{W}) \rarrequ \uHom(\tilde{h_x} \oT \tilde{D}, \hat{W})$, which gives us the bottom horizontal equivalence above by Thm 17.7.7 of \cite{Hi}, hence a top horizontal one as well by commutativity, which shows $\uHom(\tilde{C}, \hat{W})$ is $\Gamma$-local. This completes the proof. 
\end{proof}

\begin{ideal2}
	A class of objects $\cC$ in a symmetric monoidal model category $\cM$ is said to be an \textbf{ideal class of objects} \index{object! ideal class} if for any $C \in \cC$, for any object $X$ of $\cM$, $X \otimes C$ is again in $\cC$.
\end{ideal2}
We need the following fact for having an internal right Bousfield localization:
\begin{iKcolobjideal}
In a right proper, cellular model category $\cM$, $K$ a class of objects in $\cM$, $\cK$ the class of i$K$-colocal equivalences (Def. \ref{colequ}), then i$\cK$-colocal objects (Def. \ref{colocal}) form an ideal class.\\
\end{iKcolobjideal}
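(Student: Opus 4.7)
The plan is to adapt the proof of Lemma \ref{iGammalocequideal} almost verbatim, exchanging the roles of source and target. Fix an i$\cK$-colocal object $C$ and an arbitrary object $X$ of $\cM$; we must show that $X\otimes C$ is again i$\cK$-colocal, i.e.\ that for every i$K_0$-colocal equivalence $f:A\rarr B$ the induced map $\uHom(\widetilde{X\otimes C},\hat A)\rarr \uHom(\widetilde{X\otimes C},\hat B)$ is a weak equivalence in $\DCopDop$. The first step is to use that $\otimes$ is a Quillen bifunctor on $\DCop$ (as argued in Section 2.2.5 and in the proofs of Lemmas \ref{iGammalocequideal} and \ref{Gammalocequideal}) to identify $\widetilde{X\otimes C}$ with $\tilde X\otimes\tilde C$, where $\tilde X$ and $\tilde C$ are fibrant cofibrant approximations chosen so that $\tilde C\rarr C$ is a cofibration.

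Next, for any object $Z$ with cofibrant replacement $\tilde Z$, I would chain the tensor/internal-Hom adjunctions:
\begin{align*}
\Hom(\tilde Z,\uHom(\tilde X\otimes\tilde C,\hat A))&\cong \Hom(\tilde Z\otimes\tilde X\otimes\tilde C,\hat A)\\
&\cong \Hom(\tilde Z\otimes\tilde X,\uHom(\tilde C,\hat A))
\end{align*}
and similarly with $\hat B$ in place of $\hat A$. Since $C$ is i$\cK$-colocal and $f$ is an i$K_0$-colocal equivalence, the map $\uHom(\tilde C,\hat A)\rarr\uHom(\tilde C,\hat B)$ is already a weak equivalence in $\DCopDop$, and its source and target are Reedy fibrant by Proposition \ref{Fibrant}. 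Because $\otimes$ is a Quillen bifunctor on $\DCop$, $\tilde Z\otimes\tilde X$ is cofibrant, so mapping out of it preserves this weak equivalence and yields a weak equivalence of simplicial sets in the last line of the display.

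Pulling that equivalence back through the adjunction isomorphisms gives $\Hom(\tilde Z,\uHom(\tilde X\otimes\tilde C,\hat A))\rarrequ \Hom(\tilde Z,\uHom(\tilde X\otimes\tilde C,\hat B))$ for every $Z$, and Theorem 17.7.7 of \cite{Hi} then upgrades this, exactly as at the end of the proof of Lemma \ref{iGammalocequideal}, to the desired weak equivalence $\uHom(\widetilde{X\otimes C},\hat A)\rarr \uHom(\widetilde{X\otimes C},\hat B)$ in $\DCopDop$, completing the verification that $X\otimes C$ is i$\cK$-colocal.

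The only real difficulty is bookkeeping: one has to check that the chosen resolutions and approximations assemble into legitimate iRhfC data for $X\otimes C$ so that the identification $\widetilde{X\otimes C}\simeq\tilde X\otimes\tilde C$ is harmless, and that the fibrancy supplied by Proposition \ref{Fibrant} is precisely what lets the step $\Hom(\tilde Z\otimes\tilde X,-)$ convert a $\DCopDop$-weak equivalence into a weak equivalence of simplicial sets. Both points are handled in exactly the same way as in the proof of Lemma \ref{iGammalocequideal}; the right-properness hypothesis on $\cM$ will not enter here, being reserved for the later existence argument for the right Bousfield localization.
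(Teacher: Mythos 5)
There is a genuine gap here, rooted in misreading which decorated objects appear in the definition of an \emph{internal left} homotopy function complex. For i$\cK$-colocal objects and i$\cK$-colocal equivalences the relevant function complex is $\uHom_{\cM}(\tilde W,\hat A)$ with $\tilde W$ a \emph{cosimplicial resolution} of $W$ (an object of $\cM^{\Delta}$) and $\hat A$ a fibrant approximation — see Definitions \ref{colocal} and \ref{colequ}. You instead take $\tilde X$ and $\tilde C$ to be ``fibrant cofibrant approximations,'' which is the iRhfC/local setting, and then identify $\widetilde{X\otimes C}$ with $\tilde X\otimes\tilde C$ exactly as in Lemma \ref{iGammalocequideal}. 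That identification does not carry over: when $\tilde X$ and $\tilde C$ are both cosimplicial resolutions, $\tilde X\otimes\tilde C$ is a bi-cosimplicial object, not a cosimplicial resolution of $X\otimes C$, and one must pass to the diagonal $\diag(\tilde X\otimes\tilde C)$ before it can be interpreted as $\widetilde{X\otimes C}$. Because of this, the adjunction chain $\Hom(\tilde Z,\uHom(\tilde X\otimes\tilde C,\hat A))\cong\Hom(\tilde Z\otimes\tilde X,\uHom(\tilde C,\hat A))$ is not literally an isomorphism of simplicial sets; one only gets it after inserting a $\diag$ and matching degrees, which is exactly the content of the \texttt{diagHom} lemma the paper proves immediately after this one and invokes in its proof. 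That is the missing step in your argument.

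Two smaller points. First, you invoke Proposition \ref{Fibrant} for the fibrancy of $\uHom(\tilde C,\hat A)$, but that proposition concerns iRhfC's ($C$ cofibrant, $\hat X$ a simplicial resolution); the appropriate statement for iLhfC's is the one contained in (a version of) Proposition \ref{Fibrant2}. Second, the asymmetry you wave away is precisely where the paper departs from Lemma \ref{iGammalocequideal}: in the ``local'' direction the cofibrant approximations tensor to a cofibrant approximation with no extra bookkeeping, whereas in the ``colocal'' direction one tensors cosimplicial resolutions and the $\diag$ is unavoidable. Once you rewrite the chain with the diagonal, degree-by-degree, via the \texttt{diagHom} identity $\diag\Hom(Z\otimes\tilde D,\uHom(\tilde W,\hat Y))\cong\Hom(Z,\uHom(\diag(\tilde D\otimes\tilde W),\hat Y))$, and then apply Theorem 17.7.7 of \cite{Hi}, the argument closes as you intend.
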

\begin{proof}
Let $W$ be an i$\cK$-colocal object, $f: X \rarr Y$ an element of $\cK$. We have an equivalence of iLhfC's: $\uHom(\tilde{W}, \hat{X}) \rarrequ \uHom(\tilde{W}, \hat{Y})$. Now let $D \in \cM$.
\newline
We wish to show $\uHom(\widetilde{D \otimes W}, \hat{X}) \rarrequ \uHom(\widetilde{D \otimes W}, \hat{Y})$. It suffices to consider for all $Z$ cofibrant in $\cM$:
\beq
\xymatrix{
\Hom(Z, \uHom(\widetilde{D \otimes W}, \hat{X})) \ar@{=}[d] \ar@{.>}[r] &\Hom(Z, \uHom(\widetilde{D \otimes W}, \hat{Y})) \ar@{=}[d] \\
\Hom(Z, \uHom(\diag \tilde{D} \otimes \tilde{W}, \hat{X})) \ar[d]^{\cong} &\Hom(Z, \uHom(\diag \tilde{D} \otimes \tilde{W}, \hat{Y})) \ar[d]^{\cong} \\
\diag \Hom(Z \otimes \tilde{D}, \uHom(\tilde{W}, \hat{X})) \ar[r]_{\simeq} & \diag \Hom(Z \otimes \tilde{D}, \uHom(\tilde{W}, \hat{Y}))
} \nonumber
\eeq
having the bottom equivalence by definition of an i$\cK$-colocal object, it follows that the top horizontal map is an equivalence, hence by Thm 17.7.7 of \cite{Hi} we have our desired equivalence, that is $D \otimes W$ is i$\cK$-colocal.
\end{proof}

In the above proof, we used:
\begin{diagHom}
	\beq
\diag \Hom(Z \otimes \tilde{D}, \uHom(\tilde{W}, \hat{Y})) \cong \Hom(Z, \uHom(\diag (\tilde{D} \otimes \tilde{W}), \hat{Y}))  \nonumber
\eeq
\end{diagHom}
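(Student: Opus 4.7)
The plan is to verify the isomorphism level-wise in the simplicial direction and then observe naturality to upgrade the bijections to an isomorphism of simplicial sets. The key ingredient is the tensor-hom adjunction together with the pointwise description of internal Homs of (co)simplicial objects and the definition of the diagonal of a bi-cosimplicial object.

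First I would unpack both sides on $n$-simplices. Since $\tilde{D}$ and $\tilde{W}$ are cosimplicial resolutions, the bi-cosimplicial object $\tilde{D}\otimes\tilde{W}$ has diagonal $\diag(\tilde{D}\otimes\tilde{W})_n = \tilde{D}_n\otimes\tilde{W}_n$; since $\hat{Y}$ is a fibrant approximation (a single object), $\uHom(\diag(\tilde{D}\otimes\tilde{W}),\hat{Y})$ is the simplicial object with
$$\uHom(\diag(\tilde{D}\otimes\tilde{W}),\hat{Y})_n = \uHom(\tilde{D}_n\otimes\tilde{W}_n,\hat{Y}),$$
so the right-hand side has $n$-simplices $\Hom(Z,\uHom(\tilde{D}_n\otimes\tilde{W}_n,\hat{Y}))$. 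On the left-hand side, the cosimplicial object $Z\otimes\tilde{D}$ satisfies $(Z\otimes\tilde{D})_p = Z\otimes\tilde{D}_p$, and the simplicial object $\uHom(\tilde{W},\hat{Y})$ satisfies $\uHom(\tilde{W},\hat{Y})_q = \uHom(\tilde{W}_q,\hat{Y})$; the bi-simplicial set $\Hom(Z\otimes\tilde{D},\uHom(\tilde{W},\hat{Y}))$ then has $(p,q)$-entry $\Hom(Z\otimes\tilde{D}_p,\uHom(\tilde{W}_q,\hat{Y}))$, and taking the diagonal gives $n$-simplices $\Hom(Z\otimes\tilde{D}_n,\uHom(\tilde{W}_n,\hat{Y}))$.

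Next I would apply the tensor-hom adjunction twice, using associativity and symmetry of $\otimes$ in $\cM$, to produce the chain
$$\Hom(Z\otimes\tilde{D}_n,\uHom(\tilde{W}_n,\hat{Y})) \cong \Hom(Z\otimes\tilde{D}_n\otimes\tilde{W}_n,\hat{Y}) \cong \Hom(Z,\uHom(\tilde{D}_n\otimes\tilde{W}_n,\hat{Y})),$$
yielding the desired bijection at each level $n$. Finally I would note that all of these adjunction isomorphisms are natural in each variable, and the simplicial operators on both sides are induced (on $n$-simplices) by the cosimplicial operators of $\tilde{D}_n$ and $\tilde{W}_n$; hence the level-wise bijections commute with faces and degeneracies and assemble into an isomorphism of simplicial sets.

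The main obstacle I expect is purely bookkeeping: keeping straight which argument is cosimplicial, which is simplicial, and how the diagonal construction interacts with the pointwise definition of $\uHom$ on (co)simplicial resolutions. Once those conventions are fixed (in line with the conventions of Section~\ref{simpres} and \cite{Hi}), the core of the proof reduces to the standard symmetric monoidal adjunction applied level-wise.
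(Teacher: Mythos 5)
Your proposal is correct and follows essentially the same route as the paper's proof: check the isomorphism level-wise in the simplicial degree using the pointwise descriptions of the tensor, internal Hom, and diagonal, then apply the tensor-hom adjunction (the paper compresses the two adjunctions you spell out into a single step $\Hom(Z\otimes\tilde{D}_n,\uHom(\tilde{W}_n,\hat{Y}))\cong\Hom(Z,\uHom(\tilde{D}_n\otimes\tilde{W}_n,\hat{Y}))$). Your additional remark about naturality upgrading the level-wise bijections to a simplicial isomorphism is a detail the paper leaves implicit.
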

\begin{proof}
We check this componentwise:
\begin{align}
\diag \Hom(Z \otimes \tilde{D}, \uHom(\tilde{W}, \hat{Y}))_n &= \Hom((Z \otimes \tilde{D})_n, (\uHom(\tilde{W}, \hat{Y}))_n) \nonumber \\
&=\Hom(Z \otimes \tilde{D}_n, \uHom( \tilde{W}_n, \hat{Y})) \nonumber \\
&\cong \Hom(Z, \uHom(\tilde{D}_n \otimes \tilde{W}_n, \hat{Y})) \nonumber \\
&=\Hom(Z, \uHom(\diag (\tilde{D} \otimes \tilde{W})_n, \hat{Y})) \nonumber \\
&=\Hom(Z, \uHom(\diag (\tilde{D} \otimes \tilde{W}), \hat{Y})_n) \nonumber \\
&=\Hom(Z, \uHom(\diag (\tilde{D} \otimes \tilde{W}), \hat{Y}))_n \nonumber
\end{align}
\end{proof}

The first result for an internal localization is the following:
\begin{4.5.6} \label{4.5.6 mod}
If $\Gamma$ is a set of maps in the left proper, cellular model category $\DCop$, if $p:X \rarr Y$ has the right lifting property with respect to those inclusions of subcomplexes $i: C \rarr D$ that are i$\Gamma$-local equivalences and for which the size of $D$ is at most $\gamma$, the cardinal described in the previous section, then $p$ has the right lifting property with respect to all inclusions of subcomplexes that are i$\Gamma$-local equivalences.
\end{4.5.6}
\begin{proof}
	Suppose $p:X \rarr Y$ is a map that satisfies the conditions of the proposition. Let $i:C \rarr D$ be an inclusion of cell subcomplexes that is an i$\Gamma$-local equivalence and such that the size of $D$ is at most $\gamma$. It is also a $\Gamma$-local equivalence by Lemma \ref{lequ implies glequ}. Since $i$ is a $\Gamma$-local equivalence, $p$ has the right lifting property with respect to all inclusions of subcomplexes that are also $\Gamma$-local equivalences and for which the size of $D$ is at most $\gamma$. The original Prop. 4.5.6 of \cite{Hi} can then be used to conclude $p$  has the right lifting property with respect to all inclusions of subcomplexes that are $\Gamma$-local equivalences, which are also i$\Gamma$-local equivalences by Lemma \ref{lequ implies glequ}. This completes the proof.
\end{proof}
The second result is the following:
\begin{4.5.2} \label{4.5.2 mod}
If $\Gamma$ is a set of maps in the left proper, cellular model category $\DCop$, if $p:E \rarr B$ is a fibration with the right lifting property with respect to all inclusions of cell complexes that are i$\Gamma$-local equivalences, then it has the right lifting property with respect to all cofibrations that are i$\Gamma$-local equivalences.
\end{4.5.2}
\begin{proof}
	The reasoning is similar to the previous result; since i$\Gamma$-local equivalences are also $\Gamma$-local equivalences, the original Lemma 4.5.2 of \cite{Hi} applies, $p$ has the right lifting property with respect to all cofibrations that are $\Gamma$-local equivalences, which are also i$\Gamma$-local equivalences.
\end{proof}
\begin{gloc implies loc} \label{gloc implies loc}
The class of i$\Gamma$-local objects of $\DCop$ coincides with the class of $\Gamma$-local objects.
\end{gloc implies loc}
\begin{proof}
Let $W$ be an i$\Gamma$-local object in $\DCop$, let $A \rarr B$ be an object of $\Gamma$. Then $\uHom(\tilde{B}, \hat{W}) \rarr \uHom(\tilde{A},\hat{W})$ is an equivalence, which implies the equivalence $\Hom(\tilde{B}, \hat{W}) \rarr \Hom(\tilde{A}, \hat{W})$, which exactly means that $W$ is $\Gamma$-local as well. Conversely, let $W$ be $\Gamma$-local, let $A \rarr B$ in $\Gamma \subset \Gamma-loc.equ's$. Now those form an ideal class, so for any $C$, $C \otimes A \rarr C \otimes B$ is a $\Gamma$-local equivalence, hence we have an equivalence at the bottom of the commutative diagram below:
\beq
\xymatrix{
\Hom(\tilde{C}, \uHom(\tilde{B}, \hat{W})) \ar[r] \ar[d]_{\cong} & \Hom(\tilde{C}, \uHom(\tilde{A}, \hat{W})) \ar[d]^{\cong} \nonumber \\
\Hom(\tilde{C} \otimes \tilde{B}, \hat{W}) \ar@{.>}[r]_{\simeq} &\Hom(\tilde{C} \otimes \tilde{A}, \hat{W}) \nonumber
}
\eeq
and this for any $C$, so by Thm 17.7.7 of \cite{Hi}, it follows that $\uHom(\tilde{B}, \hat{W}) \rarrequ \uHom(\tilde{A}, \hat{W})$, that is $W$ is i$\Gamma$-local.
\end{proof}
\begin{lequ implies glequ} \label{lequ implies glequ}
The class of $\Gamma$-local equivalences coincides with the class of i$\Gamma$-local equivalences.
\end{lequ implies glequ}
\begin{proof}
Let $C \rarr D$ be a $\Gamma$-local equivalence. If $A$ is an object of $\DCop$, $A \otimes C \rarr A \otimes D$ is also a $\Gamma$-local equivalence since those form an ideal class. This means for all $W$ i$\Gamma$-local, in particular $\Gamma$-local by the preceding lemma:
\beq
\xymatrix{
\Hom(\widetilde{A \otimes D}, \hat{W}) \ar[d]_{\cong} \ar[r]^{\simeq} &\Hom(\widetilde{A \otimes C}, \hat{W}) \ar[d]^{\cong}\nonumber \\
\Hom(\tilde{A}, \uHom(\tilde{D}, \hat{W})) \ar@{.>}[r]_{\simeq} & \Hom(\tilde{A}, \uHom(\tilde{C},\hat{W})) \nonumber
}
\eeq
having the equivalence at the bottom of this diagram implies that we have an equivalence $\uHom(\tilde{D}, \hat{W}) \rarr \uHom(\tilde{C}, \hat{W})$ for all i$\Gamma$-local object $W$ by Thm 17.7.7 of \cite{Hi} so $C \rarr D$ is also an i$\Gamma$-local equivalence. Conversely, since equivalences of ihfC's imply equivalences of hfC's, it follows i$\Gamma$-local equivalences are also $\Gamma$-local equivalences, which completes the proof.
\end{proof}
\begin{4.5.1} \label{4.5.1 mod}
In the left proper, cellular model category $\DCop$, if $\Gamma$ is a set of maps, then there exists a set $J_{\Gamma}$ of inclusions of cell complexes such that the class of i$J_{\Gamma}$-cofibrations equals the class of cofibrations that are i$\Gamma$-local equivalences.
\end{4.5.1}
\begin{proof}
The proof is almost identical to that of \cite{Hi}, save that it is generalized. We let $J_{\Gamma}$ be a set of representatives of isomorphism classes  of inclusions of cell complexes that are i$\Gamma$-local equivalences of size at most $\gamma$. This cardinal is the one described in the previous section. The result follows, as in \cite{Hi} in the classical case, from Prop. \ref{4.5.6 mod} and Lemma \ref{4.5.2 mod}, as well as Cor. 10.5.22 of \cite{Hi}, which we use verbatim since it does not make use of a notion of equivalence.
\end{proof}

\section{Internal left Bousfield localization}
\begin{Glocal} \label{Glocal}
	For $\Gamma$ a class of maps in $\DCop$, $W \in \DCop$ is said to be \textbf{i$\Gamma$-local} \index{object! i$\Gamma$-local} if it is fibrant and for any $f:A \rarr B$ in $\Gamma$, the induced map of iRhfC's $f^*:\uHomDCop(\tilde{B},\hat{W}) \rarr \uHomDCop(\tilde{A}, \hat{W})$ is a weak equivalence in $\DCopDop$.
\end{Glocal}
\begin{Glocalequ} \label{Glocalequ}
	A map $g:X \rarr Y$ in $\DCop$ is said to be an \textbf{i$\Gamma$-local equivalence} \index{equivalence! i$\Gamma$-local} if for all i$\Gamma$-local object $W$, the induced map of iRhfC's $g^*: \uHomDCop(\tilde{Y}, \hat{W}) \rarr \uHomDCop(\tilde{X}, \hat{W})$ is a weak equivalence in $\DCopDop$.
\end{Glocalequ}
\begin{4.1.1} \label{Thm 4.1.1}
If $\Gamma$ is a class of maps in the proper, cellular model category $\DCop$, then if we define a class of equivalences on $\underline{\DCop}$ (the category underlying $\DCop$) as being i$\Gamma$-local equivalences, if we define cofibrations as being those of $\DCop$, and if define fibrations as being those maps having the right lifting property with respect to those maps that are cofibrations and i$\Gamma$-local equivalences, this defines a model structure on $\underline{\DCop}$ that we denote by $L_{\Gamma} \DCop = \DCopw$ and which we call an internal left Bousfield localization of $\DCop$ along $\Gamma$. Further, $\DCopw$ is a left proper, cellular model category.
\end{4.1.1}
\begin{proof}
	We adapt the classical proof, which uses Thm 11.3.1 of \cite{Hi}, which we use as is. The reader is referred to \cite{Hi} for its statement. By Prop. \ref{3.2.3 mod}, the class of i$\Gamma$-local equivalences satisfies the 2-3 property. This is one needed assumption of Thm 11.3.1. Another assumption about this class we need for Thm 11.3.1 to hold is that it be closed under retracts. This follows in the classical case from Prop. 3.2.4 of \cite{Hi}, which holds in the internal case as well. Consider now the set $J_{\Gamma}$ provided by Prop. \ref{4.5.1 mod}. Let $I$ be the set of generating cofibrations of $\DCop$. By definition, $I$ permits the small object argument, hence condition 1 of Thm 11.3.1 is satisfied. Since every element of $J_{\Gamma}$ has a cofibrant domain, small relative to the subcategory of cofibrations by Thm 12.4.3 of \cite{Hi} as argued in the same paper, hence in particular small relative to $J_{\Gamma}$, this latter satisfies condition 1 of Thm 11.3.1 as well. Indeed elements of $J_{\Gamma}$ are relative $I$-cell complexes, which are in $\Icof$ by Prop. 10.5.10 of \cite{Hi}, and this is the subcategory $cof$ of cofibrations, so $J_{\Gamma} \subset cof$. Condition 2 of Thm 11.3.1 is that $J_{\Gamma}\text{-}cof \subset \Icof \cap W$, with $W=i\Gamma-loc.equ's$, and this follows from Prop. \ref{4.5.1 mod}: $J_{\Gamma}\text{-}cof = cof \cap W = \Icof \cap W$. Condition 3 states $\Iinj \subset J_{\Gamma}\text{-}inj \cap W$. Prop. \ref{4.5.1 mod} implies $J_{\Gamma}\text{-}cof$ is a subcategory of $cof = \Icof$, hence $J_{\Gamma}\text{-}inf \supset \Iinj$. Finally $\Iinj = triv.fibr.$, in particular weak equivalences, which are i$\Gamma$-local equivalences, i.e. in $W$, so $\Iinj \subset J_{\Gamma}\text{-}inf \cap W$. The last condition of Thm 11.3.1, condition 4a, is satisfied by Prop. \ref{4.5.1 mod}, as argued in \cite{Hi}. This completes the proof that we have a model structure. To show it yields a left proper, cellular model category, we follow exactly the proof of \cite{Hi}. There is no manifest modification going to the general case, so we just refer the reader to the proof in \cite{Hi}.
\end{proof}

\section{Results needed in the proof of Thm 5.1.1}
We will prove a modified version of Thm 5.1.1 of \cite{Hi}, which necessitates that the model category we are localizing be right proper. For us that would be $\LGDCop$, which is left proper, but not necessarily right proper, even if $\DCop$ itself is right proper. For the sake of considering covers, we consider only those model categories $\LGDCop = \DCopw$ that are right proper as well. Since this may be a strong constraint, we will develop the notion of internal right Bousfield localization not from $\DCopw$, but from a generic right proper, cellular model category $\cM$.\\

\begin{3.2.4} \label{3.2.4 mod}
If $\cK$ is a class of maps in a model category $\cM$, the class of i$\cK$-colocal equivalences is closed under retracts.
\end{3.2.4}
\begin{proof}
Let $g:X \rarr Y$ be an i$\cK$-colocal equivalence, $f:V \rarr W$ a retract of $g$, $\hat{g}: \hat{X} \rarr \hat{Y}$ and $\hat{f}:\hat{V} \rarr \hat{W}$ fibrant approximations to $g$ and $f$ respectively such that $\hat{f}$ is a retract of $\hat{g}$. Let $C$ be an i$\cK$-colocal object. We want $\uHom(\tilde{C}, \hat{V})\rarr \uHom(\tilde{C}, \hat{W})$ to also be an equivalence. Let $X \in \cM$, with cosimplicial resolution $\tilde{X}$. Then consider:
\beq
\xymatrix{
\Hom(\widetilde{X \otimes C}, \hat{V}) \ar@{=}[d] \ar[r]^{\simeq} &\Hom(\widetilde{X \otimes C}, \hat{W}) \ar@{=}[d] \nonumber \\
\Hom(\diag \tilde{X} \otimes \tilde{C}, \hat{V}) \ar[d]_{\cong} & \Hom(\diag \tilde{X} \otimes \tilde{C}, \hat{W}) \ar[d]^{\cong} \\
\diag \Hom(\tilde{X}, \uHom(\tilde{C}, \hat{V})) \ar@{.>}[r] & \diag \Hom(\tilde{X}, \uHom(\tilde{C}, \hat{W})) }\nonumber
\eeq
i$\cK$-colocal objects form an ideal class, so $X \otimes C$ is again i$\cK$-colocal, hence $\cK$-colocal by Lemma \ref{gK-col implies K-col}. Also, $g$ i$\cK$-colocal equivalence is a $\cK$-colocal equivalence by Prop. \ref{KigK}, $f$ is a retract of $g$, so a $\cK$-colocal equivalence by the original Proposition of \cite{Hi}, hence the top map above is an equivalence. Hence we have an equivalence at the bottom of the above commutative diagram, and Thm 17.7.7 of \cite{Hi} allows us to conclude $\uHom(\tilde{C}, \hat{V}) \rarrequ \uHom(\tilde{C}, \hat{W})$, hence i$\cK$-colocal equivalences are closed under retracts.
\end{proof}

To prove the lifting argument in $\RKM$, we need to show that a trivial cofibration in $\RKM$ is also a trivial cofibration in $\cM$. The proof of this claim follows from the fact that weak equivalences are also i$\cK$-colocal equivalences, which we now prove:

\begin{equ implies K-colequ}
If $\cK$ is a class of maps in $\cM$, then weak equivalences in $\cM$ are also i$\cK$-colocal equivalences.
\end{equ implies K-colequ}
\begin{proof}
This follows readily from Thm \ref{17.7.7 mod} using diagonal objects.
\end{proof}

With this, we can now prove:
\begin{5.3.2} \label{5.3.2 mod}
Trivial cofibrations in $\RKM$ are also trivial cofibrations in $\cM$.
\end{5.3.2}
\begin{proof}
This is verbatim the proof of the original claim in \cite{Hi}, save that we use the internal results Prop. \ref{3.2.3 mod} and Prop. \ref{3.2.4 mod}.
\end{proof}

In the proof of the factorization axiom for model categories, we lift the factorization in the classical case to the internal one by invoking Lemma \ref{KigK} below, which itself follows from:

\begin{gK-col implies K-col} \label{gK-col implies K-col}
If $\cK$ is a class of maps in $\cM$, then i$\cK$-colocal objects are also $\cK$-colocal objects.
\end{gK-col implies K-col}
\begin{proof}
This readily follows from Prop. \ref{3.1.1}; internal equivalences imply classical equivalences.
\end{proof}

\begin{K-colequ implies gK-colequ} \label{KigK}
If $\cK$ is a class of maps in $\cM$, then $\cK$-colocal equivalences coincide with i$\cK$-colocal equivalences.
\end{K-colequ implies gK-colequ}
\begin{proof}
By Prop. \ref{3.1.1}, i$\cK$-colocal equivalences are also $\cK$-colocal equivalences. Conversely, if $g: X \rarr Y$ is a $\cK$-colocal equivalence, for any i$\cK$-colocal object $W$, for any $C \in \cM$, we can write:
\beq
\xymatrix{
\Hom(\widetilde{C \otimes W}, \hat{X}) \ar[d]_{\cong} \ar[r]^{\simeq} & \Hom(\widetilde{C \otimes W}, \hat{Y}) \ar[d]^{\cong} \nonumber \\
\diag \Hom(\tilde{C}, \uHom(\tilde{W}, \hat{X})) \ar@{.>}[r]_{\simeq} &\diag \Hom(\tilde{C}, \uHom(\tilde{W}, \hat{Y})) \nonumber
}
\eeq
since i$\cK$-colocal objects form an ideal class, $C \otimes W$ is also i$\cK$-colocal, hence $\cK$-colocal by the previous result, so we have the top horizontal equivalence above, hence the one at the bottom as well, and we conclude by using Thm 17.7.7 of \cite{Hi}.
\end{proof}

\section{Internal right Bousfield Localization}

\begin{colocal} \label{colocal}
	For $\cM$ a model category, $\cK$ a class of maps in $\cM$, an object $W$ of $\cM$ is \textbf{i$\cK$-colocal} \index{object! i$\cK$-colocal} if cofibrant and if for any $f:A \rarr B$ in $\cK$, we have an induced equivalence in $\cMDop$ of iLhfC's: $f_*: \uHomM(\tilde{W}, \hat{A}) \rarr \uHomM(\tilde{W}, \hat{B})$, $\tilde{W}$ a cosimplicial resolution of $W$ and $\hat{A}$ a fibrant approximation to $A$.
\end{colocal}
\begin{colequ} \label{colequ}
	A map $g: X \rarr Y$ in $\cM$ is an \textbf{i$\cK$-colocal equivalence} \index{equivalence! i$\cK$-colocal} if for any $\cK$-colocal object $W$, we have an equivalence in $\cMDop$ of iLhfC's: $g_*: \uHomM(\tilde{W}, \hat{X}) \rarr \uHomM(\tilde{W}, \hat{Y})$
\end{colequ}
The proof of Thm 5.1.1 of \cite{Hi} makes use of a further notion, which we generalize to our setting:
\begin{Kcolequ}
For $K$ a class of objects in $\cM$, a map $f:A \rarr B$ in $\cM$ is said to be an i$K$-colocal equivalence if for any object $k \in K$, we have $\uHomM(\tilde{k}, \hat{A}) \rarrequ \uHomM(\tilde{k}, \hat{B})$.
\end{Kcolequ}
We now state our theorem, much like Thm 5.1.1 of \cite{Hi}, which states the existence of an internal right Bousfield localization.
\begin{RBous}
If $\cM$ is a right proper, cellular model category, $K$ a set of objects of $\cM$, $\cK$ the class of i$K$-colocal equivalences, then the internal right Bousfield localization of $\cM$ is a model category structure on $\underline{\cM}$ with i$\cK$-colocal equivalences as weak equivalences, the same fibrations as $\cM$, and for cofibrations those maps that have the left lifting property with respect to those fibrations that are also i$\cK$-colocal equivalences.
\end{RBous}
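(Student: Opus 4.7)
The strategy is to reduce the claim to Hirschhorn's classical Theorem 5.1.1 via the identification of internal and classical colocal equivalences that has already been established in the paper, so that the proof becomes almost formal once the underlying classes are matched.

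First I would verify the two categorical properties of the proposed class of weak equivalences needed to invoke Hirschhorn's machinery. The 2-of-3 property for i$\cK$-colocal equivalences is a direct adaptation of the proof of Proposition \ref{3.2.3 mod}: given a composable pair $g,h$ and a i$\cK$-colocal object $W$, the induced maps $g_*,h_*,(hg)_*$ of iLhfC's $\uHom(\tilde{W},\hat{-})$ satisfy 2-of-3 in $\cMDop$, so if two of them are weak equivalences there so is the third. Closure under retracts is Proposition \ref{3.2.4 mod} applied to $\cK$ itself.

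Second, I would use Proposition \ref{KigK}, together with its variant phrased for a set $K$ of objects rather than a class of maps (the proof is the same diagram chase with $\uHom(\widetilde{C\otimes k},\hat{-})$ in place of $\uHom(\widetilde{C\otimes W},\hat{-})$, invoking Lemma \ref{iKcolobjideal} to stay inside the ideal class), to identify i$K$-colocal equivalences with classical $K$-colocal equivalences, and hence $\cK$ with the classical class of $K$-colocal equivalences. Since the fibrations of the proposed structure on $\underline{\cM}$ are by fiat the fibrations of $\cM$, and its cofibrations are defined by the left lifting property against trivial fibrations, the proposed structure has the same three distinguished classes as the classical right Bousfield localization $R_K\cM$ provided by Hirschhorn's Theorem 5.1.1. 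Existence of the model structure, and its right properness and cellularity, therefore transfer verbatim from Hirschhorn's theorem.

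The main obstacle, and the reason the reduction is not entirely trivial, lies in checking compatibility at the level of the small object argument used in Hirschhorn's factorization axiom: his construction produces a set of generating trivial cofibrations by attaching horns on elements of $K$ and taking cofibrant approximations, and one must confirm that the analogous internal construction yields the same saturated class. This is precisely what Lemma \ref{5.3.2 mod} secures — trivial cofibrations in $R_{\cK}\cM$ are trivial cofibrations in $\cM$ — combined with Proposition \ref{equ implies K-colequ} and Lemma \ref{gK-col implies K-col}, which ensure that the candidate generators remain trivial cofibrations in the internal sense. Right properness of $\cM$ is used, as in the classical proof, to ensure that the pullback of a trivial fibration along the factorization remains a weak equivalence, a fact which does not involve homotopy function complexes and so passes to the internal setting without modification.
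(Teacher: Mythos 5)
Your overall strategy is a genuine departure from the paper's proof. The paper does not attempt a clean reduction to Hirschhorn's Theorem 5.1.1 as a black box; instead it reruns Hirschhorn's verification of the model-category axioms one by one, substituting the internal lemmas at each step where the classical proof uses homotopy function complexes: 2-of-3 via the dual of Proposition \ref{3.2.3 mod}, retracts via Lemma 7.2.8 of Hirschhorn together with Proposition \ref{3.2.4 mod}, the lifting axioms via the definitions and Lemma \ref{5.3.2 mod}, and the factorization axiom by passing to the classical case through Proposition \ref{KigK}. Your approach -- showing that the three distinguished classes of the proposed internal structure literally coincide with those of the classical $R_K\cM$, then importing existence wholesale -- is more economical in principle and is worth noting.

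However, the reduction contains a gap at exactly the place you flag as requiring a ``variant'' of Proposition \ref{KigK}, and the gap is not closed by the diagram chase you sketch. To identify i$K$-colocal equivalences with classical $K$-colocal equivalences for $K$ merely a \emph{set of objects}, the hard direction is: given that $\Hom(\tilde{k},\hat{A})\rarrequ\Hom(\tilde{k},\hat{B})$ for all $k\in K$, conclude the corresponding internal equivalence. Running Theorem~\ref{17.7.7 mod} backwards requires knowing $\Hom(Z\otimes\tilde{k},\hat{A})\rarrequ\Hom(Z\otimes\tilde{k},\hat{B})$ for arbitrary cofibrant $Z$, i.e.\ that $Z\otimes k$ also ``tests'' the map. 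But $K$ is not assumed closed under tensoring, and Lemma~\ref{iKcolobjideal} provides an ideal property only for i$\cK$-colocal \emph{objects} (with $\cK$ the class of i$K$-colocal \emph{equivalences}), not for $K$ itself. So you need $Z\otimes k$ to be $\cK$-colocal and the given map to be a $\cK$-colocal equivalence rather than merely a $K$-colocal equivalence -- but the inclusion of $\cK$-colocal equivalences in $K$-colocal equivalences runs the wrong way a priori, so the chase does not close without an extra argument that $K$-colocal equivalences and $\cK$-colocal equivalences coincide. That is precisely the sort of subtlety the paper sidesteps by arguing axiom-by-axiom and invoking \ref{KigK} only for the factorization step, where the objects encountered are already of the right kind.

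Separately, your third paragraph is in tension with the second: if the reduction of the second paragraph were clean, existence would transfer verbatim and there would be no residual issue with the small-object argument or with saturated classes of generating trivial cofibrations. Lemma~\ref{5.3.2 mod} and Proposition~\ref{equ implies K-colequ} are used in the paper for the lifting and factorization axioms within the axiom-by-axiom scheme, not for verifying that an internal horn construction saturates to the same class as the classical one, which is not a step that appears in either proof. You should decide which route you are taking: either prove the full class identification carefully (closing the gap above) and declare victory, or verify the axioms one at a time as the paper does.
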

\begin{proof}
The proof, as in \cite{Hi}, consists in making sure the axioms of a model category are met. We will only focus on those claims that are different from the proof of \cite{Hi}. The 2-3 property is satisfied because of the dual of Prop. \ref{3.2.3 mod}. The retract argument follows from Lemma 7.2.8 of \cite{Hi}, which we use as is, and Prop. \ref{3.2.4 mod}, following the same argument as in \cite{Hi}. The lifting argument involving a cofibration is immediate, since a cofibration in $\RKM$ has the left lifting property with respect to fibrations that are also i$\cK$-local equivalences, i.e. trivial fibrations in $\RKM$. For the lifting argument involving a trivial cofibration, this follows from Lemma \ref{5.3.2 mod}. Finally for the functorial factorization axiom, we show any map $g: X \rarr Y$ can be factored as $X \stackrel{p}{\rarr} W \stackrel{q}{\rarr} Y$, where $p$ is a trivial cofibration in $\RKM$, $p$ is a fibration in $\RKM$. This follows readily from the classical case by invoking Prop. \ref{KigK}. The same is true of the factorization where now $p$ would be a cofibration, and $p$ a trivial fibration. This completes the proof.
\end{proof}

\newpage

\end{document}